\long\def\alert#1{\parindent2em\smallskip\hbox to\hsize%
{\hskip\parindent\vrule%
\vbox{\advance\hsize-2\parindent\hrule\smallskip\parindent.4\parindent%
\narrower\noindent#1\smallskip\hrule}\vrule\hfill}\smallskip\parindent0pt}
 \newtheorem{thm}{Theorem}[section]
 \newtheorem{cor}[thm]{Corollary}
 \newtheorem{lem}[thm]{Lemma}
 \newtheorem{prop}[thm]{Proposition}
 \theoremstyle{definition}
 \newtheorem{defn}[thm]{Definition}
 \theoremstyle{remark}
 \numberwithin{equation}{section}
\begin{document}


%
\title[ Lie algebras with the derived subalgebra of dimension two ]
{ Capable Lie algebras with the derived subalgebra of dimension two over an arbitrary field }
\author[P. Niroomand]{Peyman Niroomand}
\author[F. Johari]{Farangis Johari}
\author[M. Parvizi]{Mohsen Parvizi}
\address{School of Mathematics and Computer Science\\
Damghan University, Damghan, Iran}
\email{p$\_$niroomand@yahoo.com, niroomand@du.ac.ir}
\address{Department of Pure Mathematics\\
Ferdowsi University of Mashhad, Mashhad, Iran}
\email{farangis.johari@mail.um.ac.ir, farangisjohary@yahoo.com}

\address{Department of Pure Mathematics\\
Ferdowsi University of Mashhad, Mashhad, Iran}
\email{parvizi@math.um.ac.ir}

\thanks{\textit{Mathematics Subject Classification 2010.} Primary 17B30; Secondary 17B05, 17B99.}

\keywords{Capability, Schur multiplier, generalized Heisenberg Lie algebras, stem Lie algebras}

\date{\today}

\begin{abstract}
In this paper, we classify all capable nilpotent Lie algebras with the derived subalgebra of dimension 2 over an arbitrary field.  Moreover, the explicit structure of such Lie algebras of class 3 is given.
 \end{abstract}

\maketitle

\section{Introduction and  Motivation}
The concept of capability was introduced by P. Hall in \cite{hall}. Recall that  a group $ G $ is called  capable if there exists some group $ E $ such that $ G\cong E/Z(E), $ where $Z(E)  $ denotes the center of $ E.$
There are some fundamental known results concerning capability of $p$-groups. For instance, in \cite[Corollary 4.16]{3}, it is shown the only capable extra-special $p$-groups (the $p$-groups with $Z(G)=G'$ and $|G'|=p$) are those of order $p^3$ and exponent $p$.
In the case that $G'=Z(G)$ and $Z(G)$ is elementary abelian $p$-group of rank $2$, Heineken in \cite{hei}
proved that the capable ones has order at most $p^7$.\newline
   By some results due to Lazard, we may associate a $p$-group to a Lie algebra. Therefore some results of  Lie algebras and $p$-groups
   have similarities in the structure. But in this way not every thing are the same and there are differences between groups and Lie algebras, so that most of time the proofs are different. Similar to the concept of the capability for groups, a Lie algebra is called capable provided that $L\cong H/Z (H) $ for a Lie algebra $H.$
Beyl et al. in \cite{3} introduced the epicenter $Z^*(G)  $ of a group $ G $ that plays an important role in the capability of $G.$ The analogous notion of the epicenter, $Z^*(L)  $ for a   Lie algebra $L $ was defined in \cite{alam}. It is shown that $ L $ is capable if and only if $Z^*(L)=0.$\newline
Another notion having relation to the capability is the concept of exterior square of Lie algebras, $ L\wedge L, $ which was introduced in \cite{el}.
Our approach is on the concept of the exterior center $ Z^{\wedge}(L),$ the set of all elements $l$
of $L$ for which $l \wedge l' = 0_{L\wedge L } $ for all $l'\in L.$ Niroomand et al. in \cite{ni3} showed $Z^{\wedge}(L)=Z^*(L) $ for any finite dimensional Lie algebra $ L.$ In \cite{ni3}, the last two authors obtained the structure of a capable nilpotent Lie algebra $L$ when $\dim L^2\leq 1$. It  developed the result of \cite[Corollary 4.16]{3} for groups to the class of Lie algebras.\\
 Recall that from \cite{ni4}, a Lie algebra $ H $ is called generalized Heisenberg of rank $ n $ if $ H^2=Z(H) $ and $ \dim H^2=n $. If $ n=1,$ then $ H $ is a Heisenberg Lie algebra that is more well-known. Such algebras are odd dimension and have the  presentation $ H(m)\cong \langle a_1,b_1,\ldots, a_m,b_m,z\big{|}[a_l,b_l]=z,1\leq
l\leq m\rangle.$
\\
Recently, Niroomand et al.  in \cite{ni4} proved the capable generalized Heisenberg Lie algebras of rank $ 2 $ have dimension at most $ 7$ over a filed of characteristic not equal to $ 2.$
 They developed  the result of Heineken \cite{hei} for groups to the area of Lie algebras.
They also characterized the structure of all capable nilpotent Lie algebras of class two when $ \dim L^2=2. $
  In  virtue of the recent results in \cite{ni4}, in this paper we intend to classify the structure of all capable nilpotent Lie algebras of class two with the derived subalgebra of dimension $ 2$ over an arbitrary filed.
 Furthermore, we determine the structure of all nilpotent Lie algebras of class $ 3 $ with the derived subalgebra of dimension $ 2$ and then we specify which one of them are capable.

\section{ Preliminaries}
All Lie algebras in this paper are finite dimensional.
The Schur multiplier of a Lie algebra $ L,$ $ \mathcal{M}(L),$ is defined as $ \mathcal{M}(L)\cong R\cap F^2/[R,F]$ where $L\cong F/R $
and $ F $ is a free Lie algebra. It can be shown that the Lie algebra $\mathcal{M}(L)$ is abelian and independent
of the choice of the free Lie algebra $F$ (see  \cite{ba1,2b,bos,ha, mon, ni, ni1,ni2, ni3} for more information on this topics).\\
Throughout the paper  $ \otimes $  used to denote  the operator of usual tensor product of algebras. For a Lie algebra $L,$ we denote
 the factor Lie algebra $L/L^2$ by $L^{(ab)}.$ Also we denote an abelian Lie algebra of dimension $n$ by $A(n).$\\
The following proposition plays a key role in detecting the capability of Lie algebras.
\begin{prop}\label{dd}
Let $ L $ be a finite dimensional Lie algebra with a central ideal $ I.$ Then
\begin{itemize}
\item[$(i)  $]$ \dim (\mathcal{M}(L))\geq \dim( \mathcal{M}(L/I))-\dim (L^{2}\cap I),$
\item[$(ii)  $]$ \dim (\mathcal{M}(L))=\dim( \mathcal{M}(L/I))-\dim (L^{2}\cap I) $ if and only if $ I\subseteq Z^{*}(L).$
\end{itemize}
\end{prop}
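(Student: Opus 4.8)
The plan is to derive both parts simultaneously from the functorial behaviour of the nonabelian exterior square, together with the fundamental exact sequence
$$0\to\mathcal{M}(L)\to L\wedge L\to L^{2}\to 0$$
and its analogue for $L/I$ (see \cite{el}). The canonical projection $L\to L/I$ induces a homomorphism $\pi\colon L\wedge L\to (L/I)\wedge (L/I)$, and $\pi$ is surjective because $L\wedge L$ is generated by the elements $l\wedge l'$, which are carried onto a generating set of $(L/I)\wedge (L/I)$; set $K=\ker\pi$. Since $I$ is an ideal, $(L/I)^{2}=(L^{2}+I)/I\cong L^{2}/(L^{2}\cap I)$, so $\dim (L/I)^{2}=\dim L^{2}-\dim (L^{2}\cap I)$.

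First I would carry out a dimension count. The two short exact sequences give $\dim (L\wedge L)=\dim\mathcal{M}(L)+\dim L^{2}$ and $\dim\bigl((L/I)\wedge (L/I)\bigr)=\dim\mathcal{M}(L/I)+\dim L^{2}-\dim (L^{2}\cap I)$, while surjectivity of $\pi$ gives $\dim (L\wedge L)=\dim\bigl((L/I)\wedge (L/I)\bigr)+\dim K$. Eliminating $\dim(L\wedge L)$ and $\dim\bigl((L/I)\wedge(L/I)\bigr)$ yields
$$\dim\mathcal{M}(L)=\dim\mathcal{M}(L/I)-\dim (L^{2}\cap I)+\dim K .$$
Part $(i)$ is then immediate since $\dim K\ge 0$, and equality holds in $(i)$ if and only if $K=0$, i.e. if and only if $\pi$ is a monomorphism.

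It remains to characterise injectivity of $\pi$. For this I would invoke the excision-type exact sequence
$$L\wedge I\xrightarrow{\ \iota\ }L\wedge L\xrightarrow{\ \pi\ }(L/I)\wedge (L/I)\to 0$$
furnished by the basic properties of the exterior product of Lie algebras (see \cite{el}), which identifies $K=\mathrm{Im}\,\iota$ with the subspace spanned by all $l\wedge i$ with $l\in L$, $i\in I$. Hence $K=0$ if and only if $i\wedge l=0$ in $L\wedge L$ for every $i\in I$ and $l\in L$; by the definition of the exterior centre this is exactly the condition $I\subseteq Z^{\wedge}(L)$, and since $Z^{\wedge}(L)=Z^{*}(L)$ by \cite{ni3}, we obtain $I\subseteq Z^{*}(L)$, which proves $(ii)$.

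The step requiring the most care is the identification $K=\mathrm{Im}(L\wedge I\to L\wedge L)$ — that is, right exactness of the exterior square in the ideal variable — which is where the hypothesis that $I$ be a central ideal is used. If one prefers to bypass it, the same conclusions follow from a free presentation $L\cong F/R$: writing $L/I\cong F/S$ with $R\subseteq S$, centrality of $I$ forces $[S,F]\subseteq R\cap F^{2}$, whence the exact sequence
$$0\to [S,F]/[R,F]\to\mathcal{M}(L)\to\mathcal{M}(L/I)\to L^{2}\cap I\to 0$$
follows directly from the definition of the Schur multiplier; part $(i)$ drops out as before, and the equality condition $[S,F]=[R,F]$ translates into $I\subseteq Z^{*}(L)$ through the presentation-level description of the epicenter (cf. \cite{alam}).
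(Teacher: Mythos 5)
Your proof is correct, but note that the paper itself does not argue this proposition at all: its ``proof'' is a one-line citation of \cite[Proposition 4.1(iii) and Theorem 4.4]{alam}, so what you have written is a genuine derivation where the paper has only a pointer. Your main route is sound: the two copies of the fundamental sequence $0\to\mathcal{M}(\cdot)\to(\cdot)\wedge(\cdot)\to(\cdot)^{2}\to 0$ together with surjectivity of $\pi$ give $\dim\mathcal{M}(L)=\dim\mathcal{M}(L/I)-\dim(L^{2}\cap I)+\dim K$, and the Ganea-type exactness of $L\wedge I\to L\wedge L\to(L/I)\wedge(L/I)\to 0$ (valid for any ideal, cf. \cite{el}) identifies $K$ with the span of the elements $l\wedge i$, so that $K=0$ is literally the condition $I\subseteq Z^{\wedge}(L)$, which equals $Z^{*}(L)$ by \cite{ni3}. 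This buys a transparent proof of the equivalence in (ii), at the cost of importing two nontrivial facts (exactness in the ideal variable, and $Z^{\wedge}=Z^{*}$); note also that centrality of $I$ is what makes $I\subseteq Z^{\wedge}(L)$ attainable at all, since $Z^{\wedge}(L)\subseteq Z(L)$. Your fallback via a free presentation --- the four-term sequence $0\to[S,F]/[R,F]\to\mathcal{M}(L)\to\mathcal{M}(L/I)\to L^{2}\cap I\to 0$, where centrality enters as $[S,F]\subseteq R\cap F^{2}$ --- is essentially the argument behind the results of \cite{alam} that the paper cites, so either version is a legitimate self-contained replacement for the paper's citation. I see no gap.
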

\begin{proof}
 The result follows from  \cite[Proposition 4.1(iii) and Theorem 4.4 ]{alam}.
\end{proof}

The following lemma from \cite{alam} is a useful instrument in the next investigations.
 \begin{lem}\label{cor1}
Let $I$ be an ideal of a Lie algebras $L$ such that $ L/I$ is
capable. Then $Z^{*}(L)\subseteq I.$
\end{lem}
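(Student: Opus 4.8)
The plan is to exploit the functoriality of the epicenter (equivalently, the exterior center) under the natural quotient map $\pi : L \to L/I$. First I would recall the characterization, used throughout the paper and recorded just before this lemma, that a Lie algebra is capable precisely when its epicenter vanishes, together with the identification $Z^{*}(L) = Z^{\wedge}(L)$ from \cite{ni3}. So the goal becomes: show that every element of $Z^{\wedge}(L)$ already lies in $I$.

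The key step is the behaviour of the exterior product under surjections. For the quotient map $\pi : L \twoheadrightarrow L/I$ there is an induced epimorphism $\pi \wedge \pi : L \wedge L \to (L/I)\wedge(L/I)$ sending $x \wedge y \mapsto \pi(x)\wedge\pi(y)$. Now take $l \in Z^{*}(L) = Z^{\wedge}(L)$; then $l \wedge l' = 0$ in $L \wedge L$ for every $l' \in L$. Applying $\pi \wedge \pi$ gives $\pi(l)\wedge\pi(l') = 0$ in $(L/I)\wedge(L/I)$ for all $l' \in L$, and since $\pi$ is onto, $\pi(l')$ ranges over all of $L/I$; hence $\pi(l) \in Z^{\wedge}(L/I) = Z^{*}(L/I)$. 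But $L/I$ is capable by hypothesis, so $Z^{*}(L/I) = 0$, forcing $\pi(l) = 0$, i.e. $l \in \ker\pi = I$. This proves $Z^{*}(L) \subseteq I$.

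The one point that requires care — and which I expect to be the only real obstacle — is justifying that $\pi \wedge \pi$ is well defined as a Lie algebra homomorphism and, more importantly, that it carries the exterior center into the exterior center, i.e. that $\pi(Z^{\wedge}(L)) \subseteq Z^{\wedge}(L/I)$. This is the naturality of the non-abelian exterior square as a functor on Lie algebras, established in \cite{el}; once that is in hand the argument is a two-line diagram chase. Alternatively, if one prefers to stay on the epicenter side, the same conclusion follows from the standard fact that $Z^{*}$ is a quotient-compatible invariant, namely $\pi(Z^{*}(L)) \subseteq Z^{*}(L/I)$, which is a consequence of Proposition~\ref{dd}(ii) applied to suitable central ideals; either route suffices, and I would present whichever is cleanest given the references already cited.
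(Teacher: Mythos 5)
Your argument is correct. Note, though, that the paper does not actually prove this lemma: it simply imports it from \cite{alam}, so there is no internal proof to compare against. What you have written is a self-contained derivation using only tools the paper already has on the table, namely the identification $Z^{*}(L)=Z^{\wedge}(L)$ from \cite{ni3} (valid here since all Lie algebras in the paper are finite dimensional) and the functoriality of the non-abelian exterior square from \cite{el}: a surjection $\pi:L\to L/I$ induces $\pi\wedge\pi$, surjectivity lets $\pi(l')$ sweep out all of $L/I$, hence $\pi\bigl(Z^{\wedge}(L)\bigr)\subseteq Z^{\wedge}(L/I)=0$ and $Z^{\wedge}(L)\subseteq\ker\pi=I$. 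That chain is complete and each step is justified. The proof in \cite{alam} instead works directly with the epicenter via central extensions, so your route is genuinely different in mechanism but equally economical; its advantage is that it stays entirely within the exterior-square formalism the paper otherwise uses. Your fallback suggestion --- extracting the inclusion from Proposition~\ref{dd}(ii) ``applied to suitable central ideals'' --- is the only soft spot: that proposition characterizes when a central ideal $I$ satisfies $I\subseteq Z^{*}(L)$, which is not the same statement as $\pi\bigl(Z^{*}(L)\bigr)\subseteq Z^{*}(L/I)$, and turning one into the other would take real work. Since your primary argument does not need it, I would simply drop that remark.
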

The next corollary shows that the epicenter of a Lie algebra is contained in its derived subalgebra.
\begin{cor}\label{lll}
Let $ L $ be a finite dimensional non-abelian nilpotent Lie algebra. Then  $ Z^{*}(L) \subseteq L^2.$
\end{cor}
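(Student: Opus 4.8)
The plan is to deduce Corollary~\ref{lll} from Proposition~\ref{dd} together with the elementary fact that a nonzero abelian Lie algebra is never capable. First I would observe that the claim is equivalent to showing $Z^{*}(L)\cap$ (a complement of $L^{2}$ in $Z(L)$) $=0$, i.e. that if $Z^{*}(L)\not\subseteq L^{2}$ then we reach a contradiction. Since $Z^{*}(L)$ is a central ideal of $L$, I can pick a one-dimensional subspace (hence ideal) $I\subseteq Z^{*}(L)$ with $I\cap L^{2}=0$; it suffices to rule this out.

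The key step is to apply Lemma~\ref{cor1} in the contrapositive, or alternatively Proposition~\ref{dd}(ii), after passing to the quotient $L/I$. Because $I\cap L^{2}=0$, we have $(L/I)^{2}\cong L^{2}$, and more importantly $L/I$ is still non-abelian and nilpotent. Now I would use the fact (standard, and implicit in the cited literature on epicenters) that an abelian Lie algebra $A(n)$ with $n\geq 1$ is not capable, so any capable Lie algebra with trivial epicenter that has a noncentral structure is controlled by its derived subalgebra. More directly: since $I\subseteq Z^{*}(L)$ and $I\cap L^{2}=0$, Proposition~\ref{dd}(ii) gives
\[
\dim\mathcal{M}(L)=\dim\mathcal{M}(L/I)-\dim(L^{2}\cap I)=\dim\mathcal{M}(L/I).
\]
On the other hand, writing $L\cong I\oplus K$ as vector spaces is not quite an algebra decomposition, so instead I would argue that $I$ being a direct summand of $L$ avoiding $L^{2}$ forces $L\cong L/I\,\dot{+}\,I$ as Lie algebras (a direct sum with an abelian factor), and then use the Künneth-type formula for the Schur multiplier of a direct sum, $\mathcal{M}(L/I\oplus I)\cong\mathcal{M}(L/I)\oplus\mathcal{M}(I)\oplus\bigl((L/I)^{(ab)}\otimes I\bigr)$, which is strictly larger than $\mathcal{M}(L/I)$ because $I\neq 0$ and $(L/I)^{(ab)}\neq 0$. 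This contradicts the displayed equality, so no such $I$ exists, and therefore $Z^{*}(L)\subseteq L^{2}$.

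The main obstacle I anticipate is justifying that a one-dimensional central ideal $I$ with $I\cap L^{2}=0$ really splits off as a direct Lie-algebra summand; this needs $L^{2}\subseteq$ some complement of $I$, which holds since $L^{2}\cap I=0$ lets us extend $L^{2}$ to a subspace complement of $I$ that is automatically a subalgebra (indeed an ideal, as it contains $L^{2}$). A cleaner alternative that avoids the direct-sum formula is to invoke Lemma~\ref{cor1}: it would suffice to exhibit a capable quotient $L/J$ with $I\not\subseteq J$. Taking $J$ to be a complement of $I$ containing $L^{2}$, the quotient $L/J\cong I\cong A(1)$, which is \emph{not} capable, so this route instead requires the known classification input that $A(n)$ is capable iff $n\geq 2$; combined with the structure theory this still closes the argument. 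Either way, the heart of the matter is the failure of capability for the abelian direct factor, and I would present whichever of the two formulations the paper's earlier results support most directly.
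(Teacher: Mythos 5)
Your main argument is correct but noticeably more roundabout than the paper's. The paper's proof is a direct two-liner: since $L$ is nilpotent and non-abelian, $\dim L/L^{2}\geq 2$ (if $L=\langle x\rangle+L^{2}$ then $L^{2}\subseteq L^{3}$ and nilpotency forces $L^{2}=0$), so the abelian quotient $L/L^{2}$ is capable by \cite[Theorem 3.3]{ni3}, and Lemma~\ref{cor1} applied with $I=L^{2}$ gives $Z^{*}(L)\subseteq L^{2}$ outright --- no contradiction, no choice of a one-dimensional ideal, no Schur-multiplier computation. Your primary route (split off a central line $I$ with $I\cap L^{2}=0$, apply Proposition~\ref{dd}(ii) to get $\dim\mathcal{M}(L)=\dim\mathcal{M}(L/I)$, then contradict this with the K\"unneth-type formula $\mathcal{M}(K\oplus I)\cong\mathcal{M}(K)\oplus\mathcal{M}(I)\oplus(K^{(ab)}\otimes I)$) does close, since $K\supseteq L^{2}$ is automatically an ideal, $[K,I]=0$, and $K^{(ab)}\neq 0$ for a nonzero nilpotent $K$; but it imports a direct-sum formula the paper never states, and that formula is essentially the same input that proves $A(n)$ capable for $n\geq 2$ in the first place. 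Your ``cleaner alternative'' is within an inch of the paper's argument but stalls because you choose $J$ to be a complement of $I$, yielding the non-capable quotient $A(1)$; the right choice is simply $J=L^{2}$, for which $L/J\cong A(\dim L-\dim L^{2})$ with $\dim L/L^{2}\geq 2$ --- a fact you should state and prove explicitly, as it is the only real content of the corollary beyond Lemma~\ref{cor1}.
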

\begin{proof}
Since $ \dim  L/L^2\geq 2, $ $ L/L^2 $ is capable, using \cite[Theorem 3.3]{ni3}. Therefore the result follows by  Lemma \ref{cor1}.
\end{proof}

We need the notion of a central product of Lie algebras, it is defined as follows.

\begin{defn}\label{cent}
The Lie algebra $L$ is a central product of $A$ and $B,$ if $ L=A+B,$ where  $A$ and $B$ are ideals of $ L $ such that $ [A,B]=0$ and $A\cap B\subseteq Z(L).$ We denote the central product of two Lie algebras $A$ and $B$ by $A\dotplus B.$
\end{defn}

The   Heisenberg Lie algebra can be presented in terms of central products.

\begin{lem}\cite[Lemma 3.3]{pair}\label{fr}
Let $ L $ be a Heisenberg Lie algebra of dimension $2m+1.$ Then $ L $ is central products of its ideals  $B_j$ for all $1\leq j\leq m$ such that $B_j$ is  the Heisenberg Lie algebra of dimension $3.$
\end{lem}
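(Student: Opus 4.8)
The plan is to work directly with the explicit presentation recalled in the introduction, $H(m)\cong\langle a_1,b_1,\ldots,a_m,b_m,z\mid [a_l,b_l]=z,\ 1\le l\le m\rangle$, and for each $1\le j\le m$ to set $B_j:=\langle a_j,b_j\rangle$, the subalgebra of $L$ generated by $a_j$ and $b_j$. Since $[a_j,b_j]=z$ and $z$ is central in $L$, the subalgebra $B_j$ has basis $\{a_j,b_j,z\}$ and $B_j^2=Z(B_j)=\langle z\rangle$, so $B_j$ is the $3$-dimensional Heisenberg Lie algebra.

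Next I would record the bracket relations. In $L$ the only non-trivial brackets among the generators are $[a_l,b_l]=z$, and $z$ is central; hence for $i\ne j$ we get $[B_i,B_j]=0$, while $[L,B_j]\subseteq\langle z\rangle\subseteq B_j$, so each $B_j$ is an ideal of $L$. Moreover $B_1+\cdots+B_m$ contains every generator $a_l,b_l,z$, so $L=B_1+\cdots+B_m$. Finally, writing $L_k:=B_1+\cdots+B_k$, comparing bases (or a dimension count) gives $L_{k-1}\cap B_k=\langle z\rangle\subseteq Z(L)$.

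With these facts in hand the conclusion follows by induction on $k$: $L_1=B_1$ is the $3$-dimensional Heisenberg Lie algebra, and if $L_{k-1}$ has already been exhibited as a central product of $B_1,\ldots,B_{k-1}$, then $L_{k-1}$ and $B_k$ are ideals of $L_k$ with $[L_{k-1},B_k]=0$ and $L_{k-1}\cap B_k\subseteq Z(L_k)$, so $L_k=L_{k-1}\dotplus B_k$ in the sense of Definition \ref{cent}; taking $k=m$ yields the claim. The only point requiring any care is that Definition \ref{cent} is phrased for two factors, so one must make the iterated central product precise — equivalently, observe that $\dotplus$ behaves associatively when the relevant vanishing-bracket and central-intersection conditions hold — and beyond this bookkeeping the argument is just a routine verification of relations and dimensions.
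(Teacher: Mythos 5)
Your argument is correct and is the standard decomposition one would give: the paper itself states this lemma as a citation of \cite[Lemma 3.3]{pair} and supplies no proof, and your explicit construction of the ideals $B_j=\langle a_j,b_j,z\rangle$ with the bracket, dimension, and intersection checks (plus the remark on iterating Definition \ref{cent}) is exactly the expected verification.
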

 It is not  an easy matter to determine  the capability
of a central product in general. The next result gives the answer in a particular case.
\begin{prop}\label{Hi}\cite[Proposition 2.2]{ni4}
Let $L$ be a  Lie algebra such that $L=A\dotplus B$ with  $ A^2\cap B^2\neq 0.$ Then $ A^2\cap B^2\subseteq Z^{\wedge}(L)$ and so $ L $ is non-capable.
\end{prop}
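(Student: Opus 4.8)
The plan is to show that any nonzero element $x\in A^2\cap B^2$ satisfies $x\wedge l=0$ in $L\wedge L$ for every $l\in L$, which by definition places $x$ in $Z^{\wedge}(L)$; since $Z^{\wedge}(L)=Z^*(L)$ by the result of Niroomand et al.\ cited in the excerpt, a nonzero intersection forces $Z^*(L)\neq 0$, hence $L$ is non-capable. So the heart of the argument is the computation in the exterior square.

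First I would fix notation: write a typical $l\in L$ as $l=a+b$ with $a\in A$, $b\in B$ (possible since $L=A+B$), and take $x\in A^2\cap B^2$. Then $x\wedge l=x\wedge a + x\wedge b$, so it suffices to show $x\wedge a=0$ and $x\wedge b=0$. By symmetry consider $x\wedge b$ with $b\in B$. Since $x\in A^2$, write $x=\sum_i [a_i,a_i']$ with all $a_i,a_i'\in A$; then in $L\wedge L$ one has $[a_i,a_i']\wedge b = a_i\wedge[a_i',b] - a_i'\wedge[a_i,b]$ (this is precisely the defining relation of the exterior square, the Lie analogue of $[g,h]\wedge k$ in the group case, available from the presentation of $L\wedge L$ in \cite{el}). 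But $[a_i',b]\in[A,B]=0$ and $[a_i,b]\in[A,B]=0$, so each summand vanishes and $x\wedge b=0$. The same computation with the roles of $A$ and $B$ exchanged, using $x\in B^2$, gives $x\wedge a=0$ for $a\in A$. Hence $x\wedge l=0$ for all $l\in L$, so $x\in Z^{\wedge}(L)$; thus $A^2\cap B^2\subseteq Z^{\wedge}(L)=Z^*(L)$, and $L$ is non-capable.

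The main obstacle is purely bookkeeping in the exterior square: one must justify the identity $[a,a']\wedge b = a\wedge[a',b]-a'\wedge[a,b]$ from the standard relations defining $L\wedge L$, and make sure that extending from generators $[a_i,a_i']$ of $A^2$ to an arbitrary element of $A^2$ is legitimate (it is, since $\wedge$ is bilinear and $A^2$ is spanned by such brackets). There is no deep difficulty here once one has the presentation of the exterior square at hand; the only point requiring a little care is to invoke the correct form of the commutator relation in $L\wedge L$ and to use $[A,B]=0$ at the right moment. The conclusion that $L$ is non-capable is then immediate from $Z^*(L)\neq 0$ together with the criterion (stated in the introduction) that $L$ is capable if and only if $Z^*(L)=0$.
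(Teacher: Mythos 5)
Your argument is correct. Note that the paper itself gives no proof of this proposition --- it is quoted verbatim from \cite[Proposition 2.2]{ni4} --- and your computation (expanding $x\in A^2$ as a sum of brackets, applying the defining relation $[a,a']\wedge b=a\wedge[a',b]-a'\wedge[a,b]$ of the exterior square, and killing both terms via $[A,B]=0$, then symmetrically for $x\in B^2$ against $A$) is exactly the standard argument used in that reference, combined with $Z^{\wedge}(L)=Z^{*}(L)$ and the criterion that capability is equivalent to $Z^{*}(L)=0$. No gaps.
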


Following \cite[Propositions 2.4 and 2.6]{ni4}, for determining the capable nilpotent Lie algebras of class $ 2 $ with the derived subalgebra of dimension $2,$ it is enough to consider a generalized Heisenberg Lie algebra $ H $  when
 \[5\leq \dim  H\leq 7.\]
 Throughout the paper  multiplication tables with respect to a fixed basis
with trivial products of the form $[x, y] = 0$ omitted, when $ x,y $ belongs to the Lie algebra.\\ Nilpotent
Lie algebras of dimension at most $5  $ can be described uniformly over all fields in \cite{cic,Gr2,Gr}. For the dimension $ 6,7 $ over an algebraic closed filed the structure are known in \cite{cic,Gr2,Gr}.
Using notation and terminology of \cite{cic,Gr2,Gr},  we list the generalized Heisenberg Lie algebras of rank two of dimension at most $6$ and $7$ over field of characteristic 2 and characteristic different 2, respectively.
Recall that in a field $\mathbb{F}$ of characteristic $ 2,$ $ \omega $
denotes a fixed element from $\mathbb{F} \setminus \{x^2+x|x\in \mathbb{F} \}.$
 \begin{thm}\label{11233}
Let $H$ be a generalized Heisenberg Lie algebra of rank $2.$ Then
\begin{itemize}
\item[$(i)$]over a field $\mathbb{F}$ of characteristic $ 2,$ the list of the isomorphism types of generalized Heisenberg Lie algebras of dimension at most  $ 6 $ are the following:
$L_{5,8}=\langle  x_1,\ldots, x_5\big{|}[x_1, x_2] = x_4, [x_1, x_3] = x_5\rangle$ and
$L_{6,7}^{(2)}(\eta) = \langle x_1,\ldots, x_6 | [x_1, x_2] = x_5, [x_1, x_3] = x_6, [x_2, x_4] = \eta x_6, [x_3, x_4] = x_5 + x_6\rangle $
where $\eta \in \{0, \omega\},$
\item[$(ii)$]over a field $\mathbb{F}$ of  characteristic different from $ 2,$  the list of the isomorphism types of generalized Heisenberg Lie algebras of dimension at most  $ 7 $ are
\[L_{5,8}=\langle  x_1,\ldots, x_5\big{|}[x_1, x_2] = x_4, [x_1, x_3] = x_5\rangle,\]
$ L_{6,22}(\epsilon)=\langle   x_1,\ldots, x_6\big{|}[x_1, x_2] = x_5= [x_3, x_4], [x_1, x_3] = x_6,[x_2, x_4] = \epsilon x_6 \rangle,$ where $\epsilon \in \mathbb{F}/(\overset{*}{\sim})$ and  $~\text{char}~ \mathbb{F} \neq 2,$
 \[L_1=27A=  \langle x_1,\ldots, x_7\big{|}[x_1, x_2] = x_6=[x_3, x_4],[x_1, x_5] = x_7= [x_2, x_3]\rangle,\] \[L_2=27B=  \langle  x_1,\ldots, x_7\big{|}[x_1, x_2] = x_6, [x_1, x_4] = x_7=[x_3, x_5] \rangle. \]
 \end{itemize}
\end{thm}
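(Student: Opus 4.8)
The plan is to argue that the claimed lists are exhaustive by combining the dimension restriction already in hand with the known classifications of small-dimensional nilpotent Lie algebras. Recall from the discussion preceding the statement that, by \cite[Propositions 2.4 and 2.6]{ni4}, a capable nilpotent Lie algebra of class two with $\dim L^2 = 2$ forces us to only consider generalized Heisenberg Lie algebras $H$ of rank $2$ with $5 \leq \dim H \leq 7$; moreover, by Proposition \ref{Hi} and Lemma \ref{fr}, any $H$ that decomposes as a nontrivial central product with overlapping derived subalgebras is non-capable, which is what trims the range from above. So the task here is purely to enumerate: list all isomorphism types of rank-$2$ generalized Heisenberg Lie algebras $H$ with $\dim H \le 6$ (characteristic $2$) or $\dim H \le 7$ (characteristic $\neq 2$).

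First I would handle $\dim H = 5$. Here $\dim H^{(ab)} = 3$, so $H$ is generated by three elements $x_1,x_2,x_3$ and $H^2 = Z(H)$ is spanned by two brackets; the alternating bilinear map $H^{(ab)} \times H^{(ab)} \to H^2$ is given by a pencil of $3\times 3$ alternating forms of rank exactly covering a $2$-dimensional image. A short rank/base-change argument over any field shows the only possibility (up to isomorphism) is $[x_1,x_2]=x_4$, $[x_1,x_3]=x_5$, i.e. $L_{5,8}$; this matches the uniform classification of $5$-dimensional nilpotent Lie algebras in \cite{cic,Gr2,Gr}, and in particular it is field-independent, so it appears in both lists. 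For $\dim H = 6$ we have $\dim H^{(ab)} = 4$ and must sift the nilpotent Lie algebras of dimension $6$ (from \cite{cic,Gr2,Gr}) for those with $\dim L^2 = 2$ and $L^2 = Z(L)$: in characteristic $2$ this yields exactly $L_{6,7}^{(2)}(\eta)$ with $\eta \in \{0,\omega\}$, the parameter $\eta$ being controlled by whether a certain quadratic form splits over $\mathbb{F}$ (hence the appearance of the fixed nonsquare-type element $\omega \in \mathbb{F}\setminus\{x^2+x : x \in \mathbb{F}\}$); in characteristic $\neq 2$ it yields $L_{6,22}(\epsilon)$ with $\epsilon$ ranging over $\mathbb{F}$ modulo the equivalence $\overset{*}{\sim}$ coming from simultaneous congruence of the defining pair of forms. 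Finally, for $\dim H = 7$ (only needed when $\mathrm{char}\,\mathbb{F} \neq 2$) one goes through the dimension-$7$ classification and extracts those with $\dim L^2 = 2 = \dim Z(L)$ that are \emph{not} already excluded as central products by Proposition \ref{Hi}; what survives is precisely $L_1 = 27A$ and $L_2 = 27B$.

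The main obstacle is the passage to dimension $6$ and $7$, where the cited classifications \cite{cic,Gr2,Gr} are stated over an algebraically closed field, whereas the theorem is asserted over an arbitrary field. So the real content is a descent argument: one must show that the isomorphism type of such an $H$ over $\mathbb{F}$ is determined by the equivalence class, under the natural base-change action of $\mathrm{GL}(H^{(ab)}) \times \mathrm{GL}(H^2)$, of the pair of alternating forms defining the bracket, and then classify those pairs over an arbitrary $\mathbb{F}$. This is where the parameters $\eta \in \{0,\omega\}$ and $\epsilon \in \mathbb{F}/(\overset{*}{\sim})$ enter as genuine arithmetic invariants rather than artifacts of the ground field, and verifying that no further isomorphisms collapse these families (and that nothing outside them occurs) is the step requiring care. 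I would organize this by reducing, in each dimension, to a canonical form for a pencil of alternating forms with prescribed generic rank, invoking Lemma \ref{fr} to split off Heisenberg summands of dimension $3$ wherever the pencil degenerates, and reading off the finitely many remaining normal forms; the computations are routine linear algebra once the reduction is set up, so I would not spell them out in full.
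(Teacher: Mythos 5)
First, note that the paper gives no proof of Theorem \ref{11233} at all: it is presented as a transcription from the classification tables of \cite{cic,Gr2,Gr}, filtered by the conditions $\dim L^2=2$ and $L^2=Z(L)$. So your instinct to actually argue the statement (by reducing to the classification of pencils of alternating bilinear forms on $H^{(ab)}$ under the $\mathrm{GL}(H^{(ab)})\times\mathrm{GL}(H^2)$ action, with the arithmetic parameters $\eta$ and $\epsilon$ arising as invariants of that classification) goes beyond the paper, and the reduction you describe is the correct one. However, your writeup has two concrete problems. The first is a misreading of what the theorem asserts: the dimension bounds ($\le 6$, resp.\ $\le 7$) are \emph{hypotheses}, not consequences of capability, so the opening appeal to \cite[Propositions 2.4 and 2.6]{ni4} and Proposition \ref{Hi} is out of place; worse, your claim that in dimension $7$ one keeps only the algebras ``not already excluded as central products by Proposition \ref{Hi}'' is wrong on its face, since $L_2=27B$ \emph{is} a central product (that is exactly why Lemma \ref{111} declares it non-capable) and yet it belongs to the list. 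No central-product exclusion occurs in this theorem; capability is sorted out only later, in Theorem \ref{112}.

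The second, more substantive, gap is that the entire content of a classification theorem is the classification, and you defer all of it as ``routine linear algebra'': the normal forms for the pencils over an arbitrary field, the proof that $\eta\in\{0,\omega\}$ and $\epsilon\in\mathbb{F}/(\overset{*}{\sim})$ are complete and non-redundant invariants, and above all the dimension-$7$ case over a non-algebraically-closed field of characteristic $\neq 2$, where \cite{Gr2} does not directly apply and the descent you correctly flag as ``the real content'' is never carried out. You rightly identify this as the main obstacle but then do not address it. (The paper is vulnerable to the same criticism for part (ii) in dimension $7$; tellingly, for characteristic $2$ in dimension $7$ it supplies a separate hands-on argument, Theorem \ref{klj1}, precisely because the tables do not cover that case.) As it stands your proposal is a plausible proof strategy, not a proof.
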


\section{capable  generalized Heisenberg Lie algebras of rank two}
Here, we are going to determine the structures of capable  generalized Heisenberg Lie algebras of rank two. By \cite[Proposition 2.6]{ni4}, generalized Heisenberg Lie algebras with
the derived subalgebra of dimension $ 2 $ are capable if their dimension lies between $ 5 $ and $ 7.$ According to  Theorem \ref{11233},
we have the presentation of all capable generalized Heisenberg  Lie algebras of rank two over a filed $F$ with $\text{char}~ \mathbb{F}\neq 2.$ But when $\text{char}~ \mathbb{F}= 2$ the structure of them is unknown. Therefore, at first we intend to find the structure of them on an arbitrary filed and then
we determine which ones are capable.

\begin{thm}\label{klj1}
Let $ L $ be a $7$-dimensional generalized Heisenberg Lie algebra over a filed $F$ with $\text{char}~ \mathbb{F}= 2$ of rank two. Then  $L\cong  \langle x_1,\ldots, x_7\big{|}[x_1, x_2] = x_6=[x_3, x_4],[x_1, x_5] = x_7= [x_2, x_3]\rangle$ or $L\cong \langle  x_1,\ldots, x_7\big{|}[x_1, x_2] = x_6, [x_1, x_4] = x_7=[x_3, x_5] \rangle. $
\end{thm}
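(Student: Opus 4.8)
The plan is to show that any $7$-dimensional generalized Heisenberg Lie algebra $L$ of rank two over a field $\mathbb{F}$ with $\operatorname{char}\mathbb{F}=2$ is isomorphic to one of the two algebras $27A$ or $27B$ appearing in Theorem~\ref{11233}(ii); the point is that although the classification of $6$- and $7$-dimensional nilpotent Lie algebras over fields of characteristic $2$ is not available in the literature in the form we need, the subclass of generalized Heisenberg algebras of rank two is rigid enough to be handled directly. Recall that $L^2=Z(L)$ has dimension $2$, so $\dim L^{(ab)}=5$, and fixing a basis $z_1,z_2$ of $L^2$ the bracket induces a pair of alternating bilinear forms $f_1,f_2$ on the $5$-dimensional space $V=L^{(ab)}$, namely $[x,y]=f_1(x,y)z_1+f_2(x,y)z_2$. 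Two such algebras are isomorphic precisely when the corresponding pencils of alternating forms $\{\lambda f_1+\mu f_2\}$ are equivalent under the action of $GL(V)\times GL_2(\mathbb{F})$ (change of basis in $V$ and a linear change of the distinguished generators of $L^2$); moreover the condition $Z(L)=L^2$ forces the radical $\bigcap_{(\lambda,\mu)\neq 0}\operatorname{rad}(\lambda f_1+\mu f_2)$ to be $0$, i.e. the pencil is nondegenerate.

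The key step, then, is to classify nondegenerate pencils of alternating bilinear forms on a $5$-dimensional space over an arbitrary field, and to observe that the classification over characteristic $2$ yields exactly the same two normal forms ($27A$ and $27B$) as over characteristic $\neq 2$. I would proceed via the Kronecker theory of pencils of alternating forms (equivalently, pairs of skew-symmetric matrices), whose building blocks are the minimal indices / elementary-divisor type data; crucially, the Kronecker normal form for pencils of \emph{alternating} forms is characteristic-free, and in odd dimension $5$ with the nondegeneracy constraint only finitely many block-type combinations occur. Enumerating them, one finds that the numerical invariants match those of the two algebras $27A$ and $27B$ already written down, so that over any field a nondegenerate alternating pencil on $\mathbb{F}^5$ is equivalent to exactly one of these two. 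Translating back, $L$ is isomorphic to $27A$ or $27B$; the fact that these two are genuinely non-isomorphic (even in characteristic $2$) is read off from a pencil invariant such as the rank of a generic member of the pencil together with the configuration of its singular members.

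An alternative, more hands-on route—which may be what the authors intend given the style of the paper—is to avoid pencil theory and argue by brute enumeration: pick a minimal generating set, write the $5\times 5$ skew matrices of $f_1,f_2$ in a reduced form obtained by row/column operations, and use the freedom in $GL_2(\mathbb{F})$ acting on $\langle z_1,z_2\rangle$ together with $Z(L)=L^2$ to normalize the entries step by step, checking at each branch that the characteristic-$2$ subtleties (squares versus the Artin–Schreier set, existence of $\omega$) do not actually produce extra isomorphism types in dimension $7$, unlike what happens in dimension $6$ where the parameter $\eta$ survives. The main obstacle in either approach is precisely controlling these characteristic-$2$ phenomena: one must verify that the potential $\mathbb{F}$-valued parameters that obstruct a uniform normal form in dimension $6$ (the $\eta\in\{0,\omega\}$ in $L_{6,7}^{(2)}(\eta)$, and the $\epsilon$ in $L_{6,22}(\epsilon)$) collapse in dimension $7$ because the extra generator $x_5$ provides enough room to absorb them, leaving only the two rigid algebras. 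I would spend most of the proof making that collapse explicit.
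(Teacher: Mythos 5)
Your overall framework is sound: since $L^2=Z(L)$, an isomorphism of such algebras is the same thing as an equivalence of the induced pair of alternating forms $(f_1,f_2)$ on the $5$-dimensional space $L/L^2$ under $GL_5(\mathbb{F})\times GL_2(\mathbb{F})$, and the condition $Z(L)=L^2$ does translate into triviality of the joint radical of the pencil. This is a genuinely different route from the paper, which never mentions pencils: the paper instead quotients by each of the two central lines $\langle z_1\rangle,\langle z_2\rangle$, invokes the known classification of $6$-dimensional generalized Heisenberg algebras of rank one (each quotient is $H(2)\oplus A(1)$ or $H(1)\oplus A(3)$), and then normalizes the structure constants by hand relative to a basis adapted to those quotients.

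However, as written your argument has a genuine gap: the decisive step is only announced, not performed. You reduce the theorem to the claim that a nondegenerate pencil of alternating forms on $\mathbb{F}^5$, $\operatorname{char}\mathbb{F}=2$, has exactly two equivalence classes, and you propose to get this from the assertion that ``the Kronecker normal form for pencils of alternating forms is characteristic-free.'' That assertion cannot be taken for granted and is in fact false as a blanket statement: the classification of pairs of alternating forms reduces to module-theoretic data over $\mathbb{F}[t]$ whose regular part depends on the arithmetic of $\mathbb{F}$, and the very theorem you are feeding into (Theorem~\ref{11233}) exhibits field-dependent parameters for pencils on a $4$-dimensional space ($\epsilon\in\mathbb{F}/(\overset{*}{\sim})$ in $L_{6,22}(\epsilon)$, and $\eta\in\{0,\omega\}$ with $\omega$ an Artin--Schreier representative in $L_{6,7}^{(2)}(\eta)$). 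So the burden of showing that no such parameter survives in the $5$-dimensional, odd case cannot be discharged by citation; it requires the explicit enumeration of block types and the verification that the regular part is absent (or rigid) in odd dimension, precisely the computation you defer with ``I would spend most of the proof making that collapse explicit.'' The same criticism applies to your alternative brute-force route, which is described but not carried out. Until that enumeration is actually done over a characteristic-$2$ field, the proof is a plausible plan rather than a proof. (For comparison, the paper's argument, while laborious, does carry out the corresponding normalization explicitly, using the structure of the central quotients to kill brackets one at a time.)
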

\begin{proof}
 Let $ L^2=\langle z_1\rangle \oplus \langle z_2\rangle.$ By \cite[Theorem 3.6]{ni3}, we have $ L/\langle z_2\rangle\cong H(2)\oplus A(1)$ or $ L/\langle z_2\rangle\cong H(1)\oplus A(3).$ First suppose that  $ L/\langle z_2\rangle\cong H(2)\oplus A(1).$
 There exist two  ideals $ I_1/\langle z_2\rangle$ and $ I_2/\langle z_2\rangle$  of $ L/\langle z_2\rangle$ such that
\[  I_1/\langle z_2\rangle\cong H(2) ~ \text{and}~
 I_2/\langle z_2\rangle\cong A(1).\]
 Clearly, $ L=I_1+I_2, I_1\cap I_2= \langle z_2\rangle,$  $ [I_1,I_2]\subseteq  \langle z_2\rangle $ and $ [L,I_2]\subseteq  \langle z_2\rangle. $
 Thus $ I_1/\langle z_2\rangle=\langle x_1+\langle z_2\rangle,y_1+\langle z_2\rangle,x_2+\langle z_2\rangle,y_2+\langle z_2\rangle,z_1+\langle z_2\rangle|[x_1,y_1]+\langle z_2\rangle=[x_2,y_2]+\langle z_2\rangle=z_1+\langle z_2\rangle\rangle  $ and $ I_2=\langle q\rangle\oplus \langle z_2\rangle \cong A(2)$  for some $ q\in L.$ Hence the set $ \{x_1,y_1,x_2,y_2,z_1,z_2,q\} $ is a basis of $ L$  and
 \begin{align*}
 &[x_1,y_1]=z_1+\alpha_1 z_2,[x_2,y_2]=z_1+\alpha_2 z_2,\\
& [x_1,y_2]=\alpha_3 z_2,[x_1,x_2]=\alpha_4 z_2,\\
 & [x_2,y_1]=\alpha_5 z_2,[y_1,y_2]=\alpha_6 z_2,\\
 &  [x_1,q]=\alpha_7 z_2,[y_1,q]=\alpha_8 z_2,\\
  & [x_2,q]=\alpha_9 z_2,[y_2,q]=\alpha_{10} z_2.
 \end{align*}
 By changing variable, we assume that $\alpha_1=\alpha_2=0.  $
  Since $ q\notin L^2=Z(L)=\langle z_1\rangle \oplus \langle z_2\rangle,$ $  q$ is not central. Thus $ [L,I_2]=[\langle q\rangle,I_2]= \langle z_2\rangle.$ Without loss of
generality, assume that $\alpha_7 \neq 0.  $  By \cite[Theorem 3.6]{ni3}, we have $ L/\langle z_1\rangle\cong H(2)\oplus A(1)$ or $ L/\langle z_1\rangle\cong H(1)\oplus A(3).$ First suppose that  $ L/\langle z_1\rangle\cong H(2)\oplus A(1).$
 There exist two  ideals $ I_3/\langle z_1\rangle$ and $ I_4/\langle z_1\rangle$  of $ L/\langle z_1\rangle$ such that
\[  I_3/\langle z_1\rangle\cong H(2) ~ \text{and}~
 I_4/\langle z_1\rangle\cong A(1).\]
 Clearly, $ L=I_3+I_4 $ and $ [I_3,I_4]\subseteq  \langle z_1\rangle.$  \\ We claim that $ q,x_1\in I_3 $ and $[x_2,q]=[y_1,q]=[y_2,q]=[x_1,x_2]=[x_1,y_2]=0.$ Let  $ a+\langle z_1\rangle \in I_3 /\langle z_1\rangle $ and $ I_4 /\langle z_1\rangle =\langle b+\langle z_1\rangle \rangle$ such that $ q+ \langle z_1\rangle=( a+\langle z_1\rangle )+(\alpha b+\langle z_1\rangle). $ If $  a+\langle z_1\rangle=0,$ then since $ [L,I_2]=[\langle q\rangle,I_2]= \langle z_2\rangle,$ we have $ [\langle q\rangle,L]\in \langle z_1\rangle \cap   \langle z_2\rangle=0. $ So $ q\in Z(L)=L^2= \langle  z_1\rangle \oplus \langle  z_2\rangle.$ It is a contradiction. Thus $ q-a-\alpha b\in  \langle z_1\rangle$ and $ a+\langle z_1\rangle \neq 0. $ We have
 \begin{align*}
  &a=\eta_1 x_1+\eta_2 x_2+\eta_3 y_1+\eta_4 y_2+\eta_5 z_1+\eta_6 z_2+\eta_7 q, \\
   &\alpha b=\eta_1' x_1+\eta_2' x_2+\eta_3' y_1+\eta_4' y_2+\eta_5' z_1+\eta_6' z_2+\eta_7' q,
 \end{align*}
 and so \begin{align*}
    &q=a+\alpha b+\gamma z_1=\eta_1 x_1+\eta_2 x_2+\eta_3 y_1+\eta_4 y_2+\eta_5 z_1+\eta_6 z_2+\eta_7 q+\\&\eta_1' x_1+\eta_2' x_2+\eta_3' y_1+\eta_4' y_2+\eta_5' z_1+\eta_6' z_2+\eta_7' q+\gamma z_1.
    \end{align*}
 Since the set $ \{ x_1,y_1,x_2,y_2,z_1,z_2,q \} $ is linearly independent and $ L $ is a Lie algebra on filed over characteristic two, $  \eta_1=\eta_1',\eta_2=\eta_2',\eta_3=\eta_3',\eta_4=\eta_4',\eta_5=-\eta_5'-\gamma,\eta_6=\eta_6',\eta_7=1-\eta_7'.$ Thus \begin{align*}
 & q+\langle z_1\rangle =a+\langle z_1\rangle+\alpha b+\langle z_1\rangle =(\eta_1+\eta_1')x_1+(\eta_2+\eta_2') x_2+(\eta_3+\eta_3')y_1+\\&(\eta_4+\eta_4')y_2+(\eta_6+\eta_6')z_2+
 (\eta_7+\eta_7')q+\langle z_1\rangle.
 \end{align*}
 We conclude that $ q+\langle z_1\rangle= (\eta_7+\eta_7')q+\langle z_1\rangle$ so $ \eta_7\neq 0 $ or $ \eta_7'\neq 0. $ Since $ q+\langle z_1\rangle \notin I_4/\langle z_1  \rangle, $ we have $ \eta_7\neq 0 $ and $ \eta_7'= 0. $ Thus $ q+\langle z_1\rangle \in  I_3/\langle z_1  \rangle.  $ Hence $ q\in I_3. $
  Now, we prove that $x_1\in I_3.$ By contrary, assume that $x_1\notin I_3.$ We may assume that $x_1+\langle z_1\rangle=a_1+\langle z_1\rangle+\alpha'b+\langle z_1\rangle$, where $a_1+\langle z_1\rangle\in I_3/\langle z_1\rangle$ and $I_4/\langle z_1\rangle=\langle b+\langle z_1\rangle\rangle.$ Now, if $a_1+\langle z_1\rangle=0,$ then  $x_1+\langle z_1\rangle=\alpha'b+\langle z_1\rangle\in I_3/\langle z_1\rangle$ and so $[q,x_1]+\langle z_1\rangle=0.$ Since $[q,x_1]=z_2,$ we have $[q,x_1]\in \langle z_1\rangle\cap \langle z_2\rangle=0$ and hence $[q,x_1]=0.$ It is a contradiction. Thus $a_1+\langle z_1\rangle \neq 0$ and $x_1-a_1-\alpha'b\in \langle z_1\rangle.$ Taking into account the basis of $L,$ we can write $a_1=\beta_1x_1+\beta_2x_2+\beta_3y_1+\beta_4y_2+\beta_5z_1+\beta_6z_2+\beta_7q$ and $\alpha'b=\beta'_1x_1+\beta'_2x_2+\beta'_3y_1+\beta'_4y_2+
 \beta'_5z_1+\beta'_6z_2+\beta'_7q.$ Therefore $x_1=a_1+\alpha'b+\gamma z_1=(\beta_1+\beta'_1)x_1+(\beta_2+\beta'_2)x_2+(\beta_3+\beta'_3)y_1+
 (\beta_4+\beta'_4)y_2+(\beta_5+\beta'_5)z_1+(\beta_6+\beta'_6)z_2+(\beta_7+\beta'_7)q.$ Now, since the set $\{x_1,y_1,x_2,y_2,z_1,z_2,q\}$ is linearly independent and the characteristic of the field is $2,$ we have $\beta_1=1-\beta'_1,$ $\beta_2=\beta'_2,$ $\beta_3=\beta'_3,$ $\beta_4=\beta'_4,$ $\beta_5=\beta'_5+\gamma,$ $\beta_6=\beta'_6$ and $\beta_7=\beta'_7.$ Now we have $x_1+\langle z_1\rangle=a_1+\langle z_1\rangle+\alpha'b+\langle z_1\rangle=(\beta_1+\beta'_1)x_1+(\beta_2+\beta'_2)x_2+(\beta_3+\beta'_3)y_1+(\beta_4+\beta'_4)y_2+(\beta_6+\beta'_6)z_2+(\beta_7+\beta'_7)q+\langle z_1\rangle=(\beta_1+\beta'_1)x_1+\langle z_1\rangle.$ Therefore $\beta_1\neq 0$ or $\beta'_1\neq 0$. But $x_1+\langle z_1\rangle\notin I_4/\langle z_1\rangle$ and hence $\beta_1\neq 0$ and $\beta'_1=0$ which implies $x_1+\langle z_1\rangle\in I_3/\langle z_1\rangle.$ Thus $ x_1\in I_3. $ \\
  Now, we want to show $[x_1,x_2]=[x_2,q]=[y_2,q]=[y_1,q]=[x_1,y_2]=0.$  By contrary, let $ \alpha_9\neq 0. $ Using the same procedure as used in the previous step, we can show $x_2\in I_3.$ Because $I_3/\langle z_1\rangle\cong H(2),$ we have $[x_2,q]+\langle z_1\rangle=0$ and so $[x_2,q]\in \langle z_1\rangle\cap \langle z_2\rangle=0$ which is a contradiction. Similarly, one can show that each of the mentioned brackets vanishes.\\
  Here, we want to prove that exactly one of the brackets $[x_2,y_1]$ or $[y_1,y_2]$ vanish. Note that the case $[x_2,y_1]=[y_1,y_2]=0$ leads to $L/\langle z_1\rangle\cong H(1)\oplus A(3)$ which is a contradiction since $L/\langle z_1\rangle\cong H(2)\oplus A(1).$ Now, without loss of generality, assume that $[y_1,y_2]\neq 0.$ Using the same process, we may prove $[x_2,y_1]=0$ and $y_1,y_2\in I_3$ and $ x_2\in I_4. $ Thus $ L=\langle x_1,y_1,x_2,y_2,z_1,z_2,q|[y_1,y_2]=z_2=[x_1,q],[x_1,y_1]=z_1=[x_2,y_2] \rangle .$ Similarly if either $ L/\langle z_2\rangle\cong H(2)\oplus A(1)$ and $ L/\langle z_1\rangle \cong H(1)\oplus A(3) $ or $ L/\langle z_2\rangle\cong H(1)\oplus A(3)$ and $ L/\langle z_1\rangle \cong H(1)\oplus A(3),$ then $ L\cong \langle  x_1,\ldots, x_7\big{|}[x_1, x_2] = x_6, [x_1, x_4] = x_7=[x_3, x_5] \rangle. $ The result  follows.
\end{proof}
\begin{cor}\label{jhk6}
Let $ L $ be a $7$-dimensional generalized Heisenberg Lie algebra over any filed $F.$  Then  $L\cong  \langle x_1,\ldots, x_7\big{|}[x_1, x_2] = x_6=[x_3, x_4],[x_1, x_5] = x_7= [x_2, x_3]\rangle \cong L_1$ or $L\cong \langle  x_1,\ldots, x_7\big{|}[x_1, x_2] = x_6, [x_1, x_4] = x_7=[x_3, x_5] \rangle\cong L_2. $
\end{cor}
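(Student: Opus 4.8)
The plan is to reduce the statement over an arbitrary field to the two cases already handled: characteristic $2$ in Theorem \ref{klj1}, and characteristic different from $2$ in Theorem \ref{11233}(ii). The content of the corollary is twofold: first, that a $7$-dimensional generalized Heisenberg Lie algebra of rank two must have one of the two displayed presentations regardless of the field; second, that these two presentations correspond exactly to the algebras $L_1 = 27A$ and $L_2 = 27B$ from the notation of \cite{cic,Gr2,Gr}.

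First I would split on the characteristic of $F$. If $\mathrm{char}\,F = 2$, then Theorem \ref{klj1} already gives the conclusion: $L$ is isomorphic to $\langle x_1,\dots,x_7 \mid [x_1,x_2]=x_6=[x_3,x_4],\,[x_1,x_5]=x_7=[x_2,x_3]\rangle$ or to $\langle x_1,\dots,x_7 \mid [x_1,x_2]=x_6,\,[x_1,x_4]=x_7=[x_3,x_5]\rangle$. If $\mathrm{char}\,F \neq 2$, then by Theorem \ref{11233}(ii) the only isomorphism types of generalized Heisenberg Lie algebras of rank two and dimension $7$ are $L_1 = 27A$ and $L_2 = 27B$, whose defining presentations are precisely the two displayed in the corollary. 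So in both characteristics $L$ has one of the two stated presentations. It then remains only to observe that the first presentation \emph{is} $L_1$ and the second \emph{is} $L_2$: this is immediate by matching the multiplication tables, since $L_1$ and $L_2$ are defined in Theorem \ref{11233} by exactly those relators (with $x_5,x_6,x_7$ playing the roles of the third generator and the two central basis elements). Hence $L \cong L_1$ or $L \cong L_2$ over any field $F$.

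The one point that genuinely requires care — and which I expect to be the main obstacle — is the passage \emph{across} characteristics: the presentations listed in Theorem \ref{11233}(ii) for $L_1$ and $L_2$ are abstract finite presentations by generators and relators, and one must check that the Lie algebra they define over a field of characteristic $2$ is still a generalized Heisenberg algebra of rank two (so that Theorem \ref{klj1} applies to it) and, conversely, that the two algebras produced by Theorem \ref{klj1} are genuinely non-isomorphic and match the two presentations. In each presentation the span of $x_6, x_7$ is visibly a central ideal equal to the derived subalgebra, and the derived subalgebra has dimension $2$; this holds over any field and uses no division, so the "generalized Heisenberg of rank two" property is characteristic-free. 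Thus the labels $L_1, L_2$ are consistent across characteristics, and combining the two theorems yields the uniform statement. Finally, to see that the two types are distinct (so the dichotomy is genuine), note that in $L_1$ every nonzero element of $L^2$ can be written as a bracket of two elements of $L$ — indeed $L^2 = [L,L]$ with both basis vectors $x_6,x_7$ and their combinations realized — whereas in $L_2$ the element $x_6 + x_7$ is not a single bracket; this invariant, or equivalently the rank of the associated alternating-forms pencil, separates $L_1$ from $L_2$ over any field.
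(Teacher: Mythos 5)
Your proof is correct and is essentially the paper's own argument, which simply combines Theorem \ref{11233}$(ii)$ (characteristic $\neq 2$) with Theorem \ref{klj1} (characteristic $2$); the extra checks you add (that the presentations define rank-two generalized Heisenberg algebras over any field) are sound. One caveat: your closing remark that $x_6+x_7$ is not a single bracket in $L_2$ is false, since $[x_1+x_3,\,x_2+x_5]=x_6+x_7$ there; the invariant that actually separates $L_1$ from $L_2$ is the existence of a nonzero decomposable (rank-two) form in the pencil of alternating forms, which $L_2$ has and $L_1$ does not --- but this distinctness is not needed for the corollary as stated, so the error does not affect the proof.
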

\begin{proof}
The result follows from Theorems \ref{11233} $ (ii) $ and \ref{klj1}.
\end{proof}
The following result gives the Schur multipliers of Lie algebras $L_{6,22}(\epsilon)  $ and $L_{6,7}^{(2)}(\eta).$ It helps  to determine the  capability of these Lie algebras in the next proposition.

\begin{prop}\label{mul}
The Schur multiplier of Lie algebras $L_{6,22}(\epsilon)  $ and $L_{6,7}^{(2)}(\eta)  $ are abelian Lie algebras of dimension $ 8.$
\end{prop}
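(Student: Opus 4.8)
The plan is to compute each Schur multiplier directly from a free presentation, using the standard Hopf-type formula $\mathcal{M}(L)\cong (R\cap F^2)/[R,F]$ together with the more convenient dimension-counting machinery available for nilpotent Lie algebras of small dimension. Concretely, for a Lie algebra $L$ presented on generators $x_1,\dots,x_d$ with a set of defining relators, one has $\dim\mathcal{M}(L)=\dim(F^2/[R,F]) - \dim(L^2)$ once one identifies $R\cap F^2$ inside $F^2/[R,F]$; since in both cases $\dim L^2 = 2$ and the algebras are generated by $4$ elements modulo the centre, the ambient space of "formal brackets" has dimension $\binom{4}{2}=6$ at weight two, plus the higher-weight contributions coming from the class-two (resp. class-two-ish) structure of $L_{6,7}^{(2)}(\eta)$. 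I would set $F$ to be the free Lie algebra on the four "essential" generators $x_1,x_2,x_3,x_4$ (absorbing $x_5,x_6$ as the images of brackets), write down $R$ explicitly as the ideal generated by the defining relators, and then compute a basis of $(R\cap F^2)/[R,F]$ by listing all weight-$\le 3$ brackets and imposing the relations $[R,F]=0$.

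The key steps, in order, are: (1) fix the presentation from Theorem~\ref{11233} — for $L_{6,22}(\epsilon)$ the relators are $[x_1,x_2]-[x_3,x_4]$, $[x_1,x_3]-x_6$ (which just names $x_6$), $[x_2,x_4]-\epsilon x_6$, plus all brackets that must vanish in a generalized Heisenberg algebra of rank $2$ (so $[x_i,x_j]\in L^2=Z(L)$ for the remaining pairs, and $[L^2,L]=0$); similarly for $L_{6,7}^{(2)}(\eta)$ with the extra relation $[x_3,x_4]=x_5+x_6$; (2) since both algebras have nilpotency class $2$ (they are generalized Heisenberg of rank $2$, so $L^2=Z(L)$), the relevant part of $F^2/[R,F]$ lives in weights $2$ and $3$ only, and the weight-$3$ part is spanned by triple brackets $[[x_i,x_j],x_k]$ which are killed by the relations forcing $L^2$ central — so I expect the weight-$3$ contribution to $\mathcal{M}(L)$ to be computable by a short Jacobi-identity bookkeeping; (3) the weight-$2$ contribution is the span of the six brackets $[x_i,x_j]$, $1\le i<j\le 4$, modulo the linear relations among them imposed by $R$ (for $L_{6,22}$: the one relation $[x_1,x_2]=[x_3,x_4]$ together with the identifications defining $x_5=[x_1,x_2]$, $x_6=[x_1,x_3]$ and $x_6\epsilon=[x_2,x_4]$), and then subtract $\dim L^2=2$; (4) add the two contributions and check the total is $8$ in every case, in particular independently of the parameters $\epsilon$ and $\eta$.

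For the bookkeeping I would use the exact sequence/dimension formula of Proposition~\ref{dd} iteratively, quotienting by a one-dimensional central ideal $\langle z_2\rangle\subseteq L^2$ at a time: then $\dim\mathcal{M}(L)=\dim\mathcal{M}(L/\langle z_2\rangle)-\dim(L^2\cap\langle z_2\rangle)+\delta$ where $\delta\in\{0,1\}$ records whether equality holds, and $L/\langle z_2\rangle$ is a $5$-dimensional nilpotent Lie algebra (a Heisenberg algebra $H(2)$ or $H(1)\oplus A(3)$ by \cite[Theorem 3.6]{ni3}) whose Schur multiplier is already known from the literature on $5$-dimensional nilpotent Lie algebras. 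This reduces the whole computation to: known value of $\dim\mathcal{M}$ for the $5$-dimensional quotient, plus a determination of the defect $\delta$ at each step, which is governed by whether the central ideal lies in $Z^*(L)$ — and here the $7$-dimensional capability results (Corollary~\ref{jhk6}) and the central-product obstruction (Proposition~\ref{Hi}) feed back in usefully.

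The main obstacle I anticipate is the weight-$3$ part together with getting the defect $\delta$ exactly right at each quotient step: naively one might fear the answer depends on $\epsilon$ or $\eta$ (and over characteristic $2$ the element $\omega$ makes one nervous), so the crux is to show that the possible "extra" relations among triple brackets forced by Jacobi, and the possible collapses measured by $\delta$, conspire to give the same total $8$ regardless of the parameter. I would handle this by exhibiting an explicit spanning set of size $8$ for $\mathcal{M}(L)$ (proving $\dim\le 8$) and a matching lower bound via Proposition~\ref{dd}(i) applied to a well-chosen chain of central ideals (proving $\dim\ge 8$), so that the parameter-dependence washes out from both sides simultaneously.
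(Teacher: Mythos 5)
Your opening two paragraphs describe exactly what the paper's (one-line) proof appeals to: the Hardy--Stitzinger method \cite{ha}, i.e.\ the Hopf-formula computation $\mathcal{M}(L)\cong(R\cap F^2)/[R,F]$ from a free presentation on the four generators $x_1,\dots,x_4$. That computation does work and is self-contained: since $L$ has class two, $F^3\subseteq R$ and $F^4\subseteq[R,F]$, so everything lives in weights $2$ and $3$; the weight-$2$ contribution is the $4$-dimensional span $W$ of the defining relators (that is, $6-\dim L^2$), and the weight-$3$ contribution is $F_{(3)}/[W,F]_{(3)}$, where $\dim F_{(3)}=20$ and the $16$ brackets $[w,x_k]$ ($w$ a basis relator) are linearly independent because $W\otimes\langle x_1,\dots,x_4\rangle$ meets the $4$-dimensional Jacobi kernel of $F_{(2)}\otimes V\to F_{(3)}$ trivially --- this last check is where $\epsilon$, $\eta$ and the characteristic actually enter, and it succeeds in all cases, giving $4+4=8$. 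One conceptual correction: the weight-$3$ brackets are \emph{not} ``killed by the relations forcing $L^2$ central''; those relations are precisely what places $F^3$ inside $R\cap F^2$, so the triple brackets \emph{contribute} to $\mathcal{M}(L)$ except insofar as they lie in $[W,F]$.

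The genuine gap is in the strategy you commit to at the end to certify the answer. First, Proposition~\ref{dd} provides only the inequality $\dim\mathcal{M}(L)\ge\dim\mathcal{M}(L/I)-\dim(L^2\cap I)$ plus a criterion for equality; it gives no upper bound on the defect, and your claim $\delta\in\{0,1\}$ is false in general: already for $L=H(1)$ and $I=L^2$ one has $\dim\mathcal{M}(H(1))=2$ while $\dim\mathcal{M}(A(2))-1=0$, so $\delta=2$. Second, and fatally for the lower bound: quotienting by a one-dimensional central ideal $\langle z_2\rangle\subseteq L^2$ produces a $5$-dimensional algebra with one-dimensional derived subalgebra, namely $H(2)$ or $H(1)\oplus A(2)$ (not $H(1)\oplus A(3)$, which has dimension $6$), whose Schur multiplier has dimension at most $8$; hence Proposition~\ref{dd}(i) can never yield more than $\dim\mathcal{M}(L)\ge 7$, and the ``sandwich'' cannot close at $8$. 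You therefore cannot avoid the exact free-presentation computation --- which, fortunately, is precisely the argument the paper invokes without detail, and which your first half already sets up correctly.
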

\begin{proof}
Using the method of Hardy and Stitzinger in \cite{ha}, we can obtain that in both cases, the dimension of the Schur multipliers are  $ 8.$
\end{proof}

In the characterizing of capable generalized  Heisenberg Lie algebra of rank two  of dimension $ 6 $ in \cite[Theorem 2.12]{ni4}, the Lie algebra $L_{6,22}(\epsilon)  $ is missing. Here,
we  improve this result as bellow.
\begin{prop}\label{11}
$L_{6,22}(\epsilon)  $ and $L_{6,7}^{(2)}(\eta)  $  are capable.
\end{prop}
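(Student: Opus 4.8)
The plan is to show that the epicenter $Z^{*}$ of each of these two Lie algebras is trivial, using the characterization $Z^{*}(L)=Z^{\wedge}(L)=0$ iff $L$ is capable, together with the dimension formula of Proposition~\ref{dd}. First I would treat $L_{6,22}(\epsilon)$. By Corollary~\ref{lll} we already know $Z^{*}(L_{6,22}(\epsilon))\subseteq L^{2}=\langle x_5,x_6\rangle$, so it suffices to rule out that any nonzero central element $z\in\langle x_5,x_6\rangle$ lies in $Z^{*}$. The natural tool is Proposition~\ref{dd}(ii): for a one-dimensional central ideal $I=\langle z\rangle\subseteq L^{2}$, we have $I\subseteq Z^{*}(L)$ if and only if $\dim\mathcal{M}(L)=\dim\mathcal{M}(L/I)-\dim(L^{2}\cap I)=\dim\mathcal{M}(L/I)-1$. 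Since Proposition~\ref{mul} gives $\dim\mathcal{M}(L_{6,22}(\epsilon))=8$, I would need to check that for \emph{every} one-dimensional $I\subseteq L^{2}$ the quotient $L/I$ is a $5$-dimensional nilpotent Lie algebra with $\dim\mathcal{M}(L/I)\neq 9$; equivalently, that $\dim\mathcal{M}(L/I)\le 8$ for all such $I$.

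The key computational step is therefore to identify the isomorphism types of the quotients $L_{6,22}(\epsilon)/\langle z\rangle$ as $z$ ranges over a set of representatives of the projective line in $L^{2}$, and to read off (or compute via the Hardy–Stitzinger method, as in Proposition~\ref{mul}) the dimensions of their Schur multipliers. A $5$-dimensional nilpotent Lie algebra with $2$-dimensional derived subalgebra can only be $L_{5,8}\cong H(1)\oplus A(2)$, $H(2)$, or a couple of other small types; one checks that these all have Schur multiplier of dimension at most $8$ (for instance $\dim\mathcal{M}(L_{5,8})=6$, $\dim\mathcal{M}(H(2))=8$, while $H(1)\oplus A(2)$ gives $8$ as well). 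If $z$ is chosen so that $L/\langle z\rangle$ has $1$-dimensional derived subalgebra, it is a Heisenberg-type algebra plus an abelian summand, and again the multiplier dimension stays well below $9$. Since in no case does equality $\dim\mathcal{M}(L/I)-1=8$ hold, Proposition~\ref{dd}(ii) forces $I\not\subseteq Z^{*}(L)$ for every one-dimensional $I\subseteq L^{2}$; as $Z^{*}(L)$ is a subspace of $L^{2}$ containing no nonzero line, it is $0$, so $L_{6,22}(\epsilon)$ is capable. The same argument applies verbatim to $L_{6,7}^{(2)}(\eta)$ over a field of characteristic $2$, using the second half of Proposition~\ref{mul} and the characteristic-$2$ classification of small nilpotent Lie algebras.

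The main obstacle I anticipate is the case analysis of the quotients: one must verify that $L^{2}$ of each $L/\langle z\rangle$ genuinely stays two-dimensional (or drops to one) as $z$ varies, keep track of how the parameter $\epsilon$ (resp.\ $\eta$) interacts with the choice of $z$, and make sure no degenerate choice of $z$ produces an algebra whose Schur multiplier happens to have dimension $9$. This is where I would lean on the explicit structure results of \cite{cic,Gr2,Gr} together with Theorem~\ref{11233} to pin down the finite list of possible quotients, and on the Hardy–Stitzinger computation already invoked in Proposition~\ref{mul} to bound their multiplier dimensions; an alternative, cleaner route would be to exhibit explicitly, for a suitable nonzero $z$, a Lie algebra $H$ with $H/Z(H)\cong L$, but verifying that $Z(H)$ is exactly what one wants tends to be just as much work as the multiplier bookkeeping.
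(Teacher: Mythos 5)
Your proposal is correct and follows essentially the same route as the paper: both rest on Proposition~\ref{dd}(ii) together with $\dim\mathcal{M}(L)=8$ from Proposition~\ref{mul} and the observation that every quotient of $L$ by a line in $L^2$ is $H(2)$ or $H(1)\oplus A(2)$, whose Schur multipliers have dimension at most $8<9$ (note these quotients have \emph{one}-dimensional derived subalgebra, not two, so your list of candidate quotients and the identification $L_{5,8}\cong H(1)\oplus A(2)$ should be corrected accordingly, though this does not affect the inequality you need). The only real difference is that you rule out every one-dimensional subspace of $L^2$, whereas the paper checks only $\langle x_5\rangle$ and $\langle x_6\rangle$ and cites \cite{alam} to reduce to those two; your version is, if anything, the more self-contained one.
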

\begin{proof}
 Let $L\cong L_{6,22}(\epsilon) $ is capable. Then by using Theorem \ref{11233}, we have $L_{6,22}(\epsilon)^2 = Z( L_{6,22}(\epsilon))= \langle x_5\rangle \oplus \langle x_6\rangle.$ By Proposition \ref{dd} $(ii) $ and \cite[Corollary 4.6]{alam}, it is enough to show that $ \dim   \mathcal{M}(L_{6,22}(\epsilon)/\langle x_i\rangle) -1< \dim \mathcal{M}(L_{6,22}(\epsilon)) $ for $ i=5,6. $ Clearly, $ L_{6,22}(\epsilon)/\langle x_i\rangle \cong H(2) $ or $ L_{6,22}(\epsilon)/\langle x_i\rangle \cong H(1)\oplus A(2)$ for $ i=5,6, $ by using \cite[Theorem 3.6]{ni3}. Thus for $ i=5,6,$ we have $ \dim   \mathcal{M}(L_{6,22}(\epsilon)/\langle x_i\rangle)=5~ \text{or}~  8,$ by \cite[Lemma 2.6 and Theorem 2.7]{ni3}.
Since  $\dim \mathcal{M}(L_{6,22}(\epsilon))=8,$ by Proposition \ref{mul}, we conclude that  \begin{align*}
   \dim \mathcal{M}(L_{6,22}(\epsilon)/\langle x_i\rangle)-1< \dim \mathcal{M}(L_{6,22}(\epsilon))~\text{for}~  i=5,6.\end{align*}
Therefore $L_{6,22}(\epsilon)  $  is capable. By a similar way, we can see that $L_{6,7}^{(2)}(\eta)  $ is also  capable. The proof is completed.
\end{proof}
The next result is useful.
\begin{lem}\cite[Lemma 2.11]{ni4}\label{111}
 $  L_{5,8} $ and   $ L_1$ are capable while $ L_2 $ is not.
\end{lem}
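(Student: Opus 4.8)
The plan is to treat the three assertions with different tools: the non-capability of $L_2$ follows from the central-product criterion of Proposition~\ref{Hi}, while the capability of $L_{5,8}$ and of $L_1$ comes from the Schur-multiplier equality of Proposition~\ref{dd}(ii) after reducing, by Corollary~\ref{lll}, to $1$-dimensional central ideals. For $L_2=\langle x_1,\dots,x_7\mid[x_1,x_2]=x_6,\ [x_1,x_4]=x_7=[x_3,x_5]\rangle$, put $A=\langle x_1,x_2,x_4,x_6,x_7\rangle$ and $B=\langle x_3,x_5,x_7\rangle$. Reading off the multiplication table one checks immediately that $A$ and $B$ are ideals, $[A,B]=0$, $A+B=L_2$ and $A\cap B=\langle x_7\rangle\subseteq Z(L_2)$, so $L_2=A\dotplus B$ (in fact $A\cong L_{5,8}$ and $B\cong H(1)$); since $A^2=\langle x_6,x_7\rangle$ and $B^2=\langle x_7\rangle$ we obtain $A^2\cap B^2=\langle x_7\rangle\neq0$, and Proposition~\ref{Hi} gives $x_7\in Z^{\wedge}(L_2)$, so $L_2$ is not capable.

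For the two capability statements the common reduction is: by Corollary~\ref{lll} we have $Z^{*}(L)\subseteq L^2$, and $\dim L^2=2$ in both cases, so a non-capable such $L$ would contain a $1$-dimensional ideal $\langle z\rangle\subseteq L^2$ with $\langle z\rangle\subseteq Z^{*}(L)$, whence by Proposition~\ref{dd}(ii) $\dim\mathcal{M}(L)=\dim\mathcal{M}(L/\langle z\rangle)-1$; thus it suffices to establish the strict inequality $\dim\mathcal{M}(L)>\dim\mathcal{M}(L/\langle z\rangle)-1$ for every $0\neq z\in L^2$. For $L=L_{5,8}=\langle x_1,\dots,x_5\mid[x_1,x_2]=x_4,[x_1,x_3]=x_5\rangle$, since $v\mapsto[x_1,v]$ is a linear isomorphism $\langle x_2,x_3\rangle\to L_{5,8}^2$ over any field, replacing $x_2$ by an appropriate vector of $\langle x_2,x_3\rangle$ produces a presentation of the same shape in which $z$ plays the role of $x_4$; hence $L_{5,8}/\langle z\rangle\cong H(1)\oplus A(1)$ and $\dim\mathcal{M}(L_{5,8}/\langle z\rangle)=4$ for every $0\neq z\in L_{5,8}^2$. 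On the other hand, from a free presentation $L_{5,8}=F/R$ on the three generators $x_1,x_2,x_3$---so that $R$ equals $F^3$ together with the ideal generated by $[x_2,x_3]$---one computes $\dim\mathcal{M}(L_{5,8})=\dim((R\cap F^2)/[F,R])=6$. Since $6>4-1$, no such $\langle z\rangle$ lies in $Z^{*}(L_{5,8})$, so $Z^{*}(L_{5,8})=0$ and $L_{5,8}$ is capable.

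For $L=L_1=\langle x_1,\dots,x_7\mid[x_1,x_2]=x_6=[x_3,x_4],[x_1,x_5]=x_7=[x_2,x_3]\rangle$ the same route applies. Writing $z=\alpha x_6+\beta x_7$ with $(\alpha,\beta)\neq(0,0)$, the quotient $L_1/\langle z\rangle$ has $1$-dimensional derived subalgebra, so by \cite[Theorem 3.6]{ni3} it is $\cong H(m)\oplus A(n)$; being $6$-dimensional it is $H(2)\oplus A(1)$, with $\dim\mathcal{M}=9$, or $H(1)\oplus A(3)$, with $\dim\mathcal{M}=11$. Computing the rank of the induced alternating form $\bar x_i\wedge\bar x_j\mapsto[x_i,x_j]\bmod\langle z\rangle$ on $L_1^{(ab)}$---a $5\times5$ skew matrix whose entries depend only on $(\alpha,\beta)$---one checks that it has rank $4$ for every $(\alpha,\beta)\neq(0,0)$, over an arbitrary field (characteristic $2$ included); hence $L_1/\langle z\rangle\cong H(2)\oplus A(1)$ and $\dim\mathcal{M}(L_1/\langle z\rangle)=9$ for every such $z$. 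It remains to show $\dim\mathcal{M}(L_1)\geq9$. From a free presentation $L_1=F/R$ on $x_1,\dots,x_5$, the degree-$2$ part of $\mathcal{M}(L_1)$ is the degree-$2$ relation space $R_2\subseteq\Lambda^2V$ (where $V=L_1^{(ab)}$), of dimension $\binom{5}{2}-\dim L_1^2=8$, while its degree-$3$ part has dimension $\dim\big((V\otimes R_2)\cap\Lambda^3V\big)$, with $\Lambda^3V$ sitting in $V\otimes\Lambda^2V$ as the Jacobi relations; a direct check shows this intersection is $1$-dimensional, spanned by the Jacobi element $x_2\otimes(x_4\wedge x_5)+x_4\otimes(x_5\wedge x_2)+x_5\otimes(x_2\wedge x_4)$, which lies in $V\otimes R_2$ because $[x_4,x_5]=[x_2,x_5]=[x_2,x_4]=0$ in $L_1$. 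Therefore $\dim\mathcal{M}(L_1)=8+1=9>9-1$, so $Z^{*}(L_1)=0$ and $L_1$ is capable.

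The step I expect to be the main obstacle is this last equality $\dim\mathcal{M}(L_1)=9$: one must see that the $8$-dimensional relation space $R_2$ sits in \emph{non-generic} position inside $\Lambda^2V$, so that $(V\otimes R_2)\cap\Lambda^3V\neq0$---equivalently, that the degree-$3$ component of $\mathcal{M}(L_1)$ does not vanish. (Were $\dim\mathcal{M}(L_1)$ equal to $8$, Proposition~\ref{dd}(ii) would place \emph{every} $1$-dimensional ideal of $L_1^2$ in $Z^{*}(L_1)$, forcing $Z^{*}(L_1)=L_1^2$ and non-capability, exactly as for $L_2$.) A subsidiary point requiring care is verifying the rank-$4$ assertion for the alternating form, and the isomorphism $L_{5,8}/\langle z\rangle\cong H(1)\oplus A(1)$, uniformly over every field and not merely over algebraically closed fields or fields of characteristic $\neq2$; by Corollary~\ref{jhk6} this amounts to a finite verification.
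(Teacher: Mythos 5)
Your argument is correct, but it is not comparable to anything in the paper itself: the paper gives no proof of this lemma, it simply cites \cite[Lemma 2.11]{ni4}. What you have done is reconstruct that external result, and your strategy is exactly the one the paper deploys for the six-dimensional case in Proposition~\ref{11}: non-capability of $L_2$ via the central-product criterion of Proposition~\ref{Hi} (your decomposition $L_2=A\dotplus B$ with $A\cong L_{5,8}$, $B\cong H(1)$, $A^2\cap B^2=\langle x_7\rangle$ is the natural one and checks out), and capability of $L_{5,8}$ and $L_1$ via the strict inequality in Proposition~\ref{dd}, using $Z^{*}(L)\subseteq L^2=Z(L)$ to reduce to one-dimensional central ideals. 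Where you go beyond the paper is in computing the Schur multipliers explicitly rather than invoking the Hardy--Stitzinger method as a black box, and your numbers are right: $\dim\mathcal{M}(L_{5,8})=6$ versus $\dim\mathcal{M}(L_{5,8}/\langle z\rangle)-1=3$, and $\dim\mathcal{M}(L_1)=9$ versus $\dim\mathcal{M}(L_1/\langle z\rangle)-1=8$ (I verified the rank-$4$ claim for the induced form on $L_1^{(ab)}$ over any field, and that $(V\otimes R_2)\cap\Lambda^3V$ is spanned precisely by the Jacobi element of $x_2,x_4,x_5$). One presentational caveat: your statement that the degree-$3$ component of $\mathcal{M}(L_1)$ \emph{equals} $\dim\bigl((V\otimes R_2)\cap\Lambda^3V\bigr)$ is only valid because $\dim(V\otimes R_2)=5\cdot 8=40=\dim F_3$ here; in general that component is $\dim F_3-\dim[V,R_2]=\dim F_3-\dim(V\otimes R_2)+\dim\bigl((V\otimes R_2)\cap\Lambda^3V\bigr)$, and this is also the formula you need to get $1+5=6$ for $L_{5,8}$ (where $\dim F_3=8>3=\dim(V\otimes R_2)$). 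Stating the graded decomposition $\mathcal{M}(L)=R_2\oplus F_3/[V,R_2]$ once, for class-two $L$ with homogeneous quadratic relations, would make both computations uniform and manifestly characteristic-free.
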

We are ready to summarize  our results to show that which ones of generalized Heisenberg Lie algebras of rank $2$  is capable.

\begin{thm}\label{112}
Let $H$ be an $ n$-dimensional generalized Heisenberg Lie algebra of rank $2.$ Then
 $ H $  is capable if and only if
  $ H $ is isomorphic to one of Lie algebras $ L_{5,8}, L_{6,22}(\epsilon), L_{6,7}^{(2)}(\eta)$ or  $ L_1. $
 \end{thm}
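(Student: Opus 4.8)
The plan is to combine the earlier structural results, so that the statement becomes essentially a bookkeeping exercise over the finite list of candidate algebras. The starting point is the observation, already used in Section 3, that by \cite[Propositions 2.4 and 2.6]{ni4} a generalized Heisenberg Lie algebra $H$ of rank $2$ is non-capable once $\dim H\geq 8$, and is automatically capable when $5\leq\dim H\leq 7$; moreover $\dim H\geq 5$ always. So the only thing to pin down is \emph{which} isomorphism types actually occur in each dimension $5,6,7$, and this is exactly the content of Theorem \ref{11233} together with Corollary \ref{jhk6}.

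First I would dispose of the dimension-$7$ case. Over a field of characteristic $\neq 2$, Theorem \ref{11233}$(ii)$ lists the only rank-$2$ generalized Heisenberg algebras of dimension $7$ as $L_1=27A$ and $L_2=27B$; over characteristic $2$, Theorem \ref{klj1} gives the same two presentations, and Corollary \ref{jhk6} packages this uniformly: every $7$-dimensional such algebra is $\cong L_1$ or $\cong L_2$. By Lemma \ref{111}, $L_1$ is capable and $L_2$ is not. Hence in dimension $7$ exactly $L_1$ survives.

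Next, dimension $6$. By Theorem \ref{11233}, the rank-$2$ generalized Heisenberg algebras of dimension $6$ are $L_{6,22}(\epsilon)$ when $\mathrm{char}\,\mathbb{F}\neq 2$ and $L_{6,7}^{(2)}(\eta)$ when $\mathrm{char}\,\mathbb{F}=2$; Proposition \ref{11} shows both families are capable. (Here I would note explicitly that $L_{6,22}(\epsilon)$ was the case omitted from \cite[Theorem 2.12]{ni4}, so invoking Proposition \ref{11} rather than that theorem is what makes the argument complete.) Finally, dimension $5$: the unique type is $L_{5,8}$, which is capable by Lemma \ref{111}. Assembling the three cases, an $n$-dimensional rank-$2$ generalized Heisenberg Lie algebra is capable precisely when it is isomorphic to $L_{5,8}$, $L_{6,22}(\epsilon)$, $L_{6,7}^{(2)}(\eta)$, or $L_1$, which is the claim. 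For the converse direction one just remarks that each of these four is indeed a generalized Heisenberg algebra of rank $2$ and dimension between $5$ and $7$, so \cite[Proposition 2.6]{ni4} (or the cited lemmas) gives capability directly.

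The only genuine subtlety — and the step I would flag as the crux — is making sure the classification inputs are actually exhaustive over an \emph{arbitrary} field, not just an algebraically closed one or one of characteristic $\neq 2$. That is precisely why Theorem \ref{klj1} (the characteristic-$2$, dimension-$7$ classification) and Proposition \ref{11} (capability of $L_{6,7}^{(2)}(\eta)$) had to be proved first; with those in hand the present theorem has no remaining content beyond citing them, together with Lemmas \ref{111} and \ref{fr}-type reductions from \cite{ni4} controlling dimensions $\geq 8$ and $\leq 4$. So I expect the proof to be short: a case split on $\dim H\in\{5,6,7\}$ with one or two sentences per case, plus the $\dim H\geq 8$ non-capability remark.
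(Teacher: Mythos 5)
Your argument follows the paper's proof essentially verbatim: \cite[Proposition 2.6]{ni4} bounds the dimension between $5$ and $7$, Theorem \ref{11233} and Corollary \ref{jhk6} supply the list of candidates, and Proposition \ref{11} together with Lemma \ref{111} sort out which of them are capable. One small imprecision to fix: \cite[Proposition 2.6]{ni4} gives only the necessary condition $5\le\dim H\le 7$ (it does not say every such $H$ is capable --- $L_2$ is a $7$-dimensional non-capable example), so in the converse direction you must rely on Proposition \ref{11} and Lemma \ref{111}, not on the dimension bound alone.
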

\begin{proof}
Let $ H $ be capable. Then \cite[Proposition 2.6]{ni4} implies $5\leq  \dim H\leq 7.$ Using Corollary \ref{jhk6} and  Theorem \ref{11233},  $ H $ is isomorphic to one of Lie algebras $ L_{5,8}, L_{6,22}(\epsilon),L_{6,7}^{(2)}(\eta),$  $ L_1 $ or $ L_2. $  By Proposition \ref{11} and  Lemma \ref{111}, all of them are capable while $ L_2 $ is non-capable. The converse is held by Proposition \ref{11} and  Lemma \ref{111}.
\end{proof}

 \section{Stem nilpotent Lie algebras of class  $ 3 $  with the derived subalgebra of dimension $ 2 $}
We know that every nilpotent Lie algebra with the derived subalgebra of dimension 2 is of class two and three.
  In this section, we are going to obtain the structure of stem Lie algebras of class $ 3 $ with the derived subalgebra of dimension $ 2. $ Then we determine which of them is capable. Moreover, we show that all such Lie algebras of dimension greater than $ 6 $ are unicentral. \\
 Recall that an $n$-dimensional nilpotent Lie algebra  $L$  is said to be  nilpotent of maximal class if the class of $L$ is $n-1.$  If $L$ is of maximal class, then  $\dim (L/L^2) = 2,$ $Z_i(L) = L^{n-i}$ and $\dim (L^j/L^{j+1}) = 1$ for all $0\leq  i \leq n-1$ and $2\leq j \leq n-1$      (see \cite{bos} for more information).

 From \cite{Gr}, the only Lie algebra of  maximal class of dimension $4$ is isomorphic to
$L_{4,3}=\langle x_1,\ldots,x_4|[x_1, x_2] = x_3, [x_1, x_3] = x_4\rangle.$

We say a Lie algebra $L$ is a semidirect sum of an ideal $I$ and a subalgebra $K$ if $L=I+K,$
$ I\cap K=0. $ The semidirect sum of an ideal $I$ and a subalgebra $K$ is denoted by $K\ltimes I.$\newline

Let $ cl(L)$ denotes the nilpotency class of a Lie algebra $ L. $
The following two lemmas characterize the structure of all stem Lie algebras $ L $ of dimensions $ 5$ and $6, $ when $ cl(L)=3 $ and $ \dim L^2=2. $

 \begin{lem}\label{rr11}
  Let $L  $  be a nilpotent stem Lie algebra of dimension $ 5 $ such that $ \dim L^2=2 $ and $ cl(L)= 3.$ Then
\[ L\cong L_{5,5}=\langle x_1,\ldots,x_5| [x_1, x_2] = x_3, [x_1, x_3] = x_5, [x_2, x_4] = x_5\rangle.\]
Moreover,
$L_{5,5}=I\rtimes \langle x_4\rangle  $, where \[ I=\langle x_1,x_2,x_3,x_5| [x_1, x_2] = x_3, [x_1, x_3] = x_5\rangle\cong L_{4,3},~\text{and}~[I, \langle x_4\rangle]=\langle x_5\rangle.\]
  \end{lem}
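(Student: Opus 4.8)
The plan is to pin down the multiplication table of a $5$-dimensional stem Lie algebra $L$ with $\dim L^2 = 2$ and $cl(L)=3$ by exploiting the lower central series. Since $cl(L)=3$ we have $L \supsetneq L^2 \supsetneq L^3 \supsetneq L^4 = 0$, and since $\dim L^2 = 2$ the factors $L^2/L^3$ and $L^3$ must each be one-dimensional; write $L^3 = \langle x_5 \rangle$ and pick $x_3 \in L^2$ with $L^2 = \langle x_3 \rangle \oplus \langle x_5 \rangle$. Because $L$ is stem, $Z(L) \subseteq L^2$, and as $cl(L)=3$ we get $0 \neq L^3 \subseteq Z(L)$; a dimension count forces $Z(L) = L^3 = \langle x_5 \rangle$ (if $Z(L)$ were all of $L^2$ then $L$ would be nilpotent of class $2$). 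In particular $\dim L/L^2 = 3$ would make $L$ too big relative to $\dim L^2 = 2$ in class $3$, so first I would check $\dim L/L^2 = 2$ or $3$ and rule out the larger case; with $\dim L^2 = 2$ and $cl=3$ one shows $\dim L \le 5$ forces $\dim(L/L^2) \le 3$, and the class-$3$ condition together with stem-ness pushes us to $\dim(L/L^2)=2$ or careful bookkeeping in the $=3$ case.

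Next I would choose generators. Since $L/L^2$ is generated by images of, say, $x_1, x_2$ (in the $\dim L/L^2 = 2$ scenario) together with at most one more generator $x_4$ that lands in... — more cleanly: take $x_1, x_2$ with $[x_1,x_2] \equiv x_3 \pmod{L^3}$, rescaling so that $[x_1,x_2] = x_3$ exactly (absorbing any $L^3$-component into $x_3$). Then $[x_1,x_3], [x_2,x_3] \in L^3 = \langle x_5 \rangle$, and since $L^3 \neq 0$ at least one of these is nonzero; after an invertible change among $x_1,x_2$ we may assume $[x_1,x_3] = x_5$ and $[x_2,x_3] = 0$. This already produces a copy of $L_{4,3}$ on $\{x_1,x_2,x_3,x_5\}$. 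The fifth basis vector $x_4$ must be chosen outside $\langle x_1,x_2,x_3,x_5\rangle$; its brackets $[x_4,x_1],[x_4,x_2],[x_4,x_3]$ lie in $L^2$, but since $x_4 \notin L^2$ and the class is exactly $3$, one analyzes which components are forced. The key point is that $x_4$ cannot be central (else $L$ is a direct sum $L_{4,3} \oplus A(1)$, which has $\dim L^2 = 2$ but is \emph{not} stem-indecomposable in the required way — actually one must check $L_{4,3}\oplus A(1)$ fails to be stem, since its center contains $x_4 \notin L^2$). So some bracket $[x_4, x_i]$ is nonzero; using the Jacobi identity and the constraint $\dim L^2 = 2$, one shows the only surviving possibility up to isomorphism is $[x_2,x_4] = x_5$ with $[x_1,x_4]=[x_3,x_4]=0$, giving exactly $L_{5,5}$.

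For the "moreover" part, set $I = \langle x_1, x_2, x_3, x_5\rangle$; from the multiplication table $[x_1,x_2]=x_3$, $[x_1,x_3]=x_5$ and all brackets among these four staying inside $I$, so $I$ is a subalgebra, and since $[L, I] \subseteq L^2 = \langle x_3, x_5 \rangle \subseteq I$ it is an ideal isomorphic to $L_{4,3}$. Then $L = I + \langle x_4 \rangle$ with $I \cap \langle x_4 \rangle = 0$ by construction, so $L = I \rtimes \langle x_4 \rangle$, and $[I, \langle x_4\rangle]$ is spanned by $[x_1,x_4], [x_2,x_4], [x_3,x_4]$, which equals $\langle x_5 \rangle$ by the table.

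The main obstacle is the case analysis for $x_4$: one must carefully enumerate the possible brackets $[x_4, x_1], [x_4, x_2], [x_4, x_3] \in L^2$ subject to the Jacobi identity, the constraints $L^3 = \langle x_5\rangle$ and $L^4 = 0$, and the requirement that $L$ be stem (i.e. $Z(L) = \langle x_5 \rangle$, not larger), and then reduce the resulting family to the single normal form by invertible changes of basis. I expect one also needs to handle the a priori possibility $\dim(L/L^2) = 3$ and show it cannot occur together with $\dim L^2 = 2$, $cl(L) = 3$, and stem-ness in dimension $5$ — this should follow from a rank/dimension count, but it is the other place where care is needed.
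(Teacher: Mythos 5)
Your overall strategy---deriving the multiplication table of $L$ directly from its lower central series---is genuinely different from the paper's, which simply reads the answer off de~Graaf's classification of nilpotent Lie algebras of dimension at most $5$ (uniform over all fields) and then verifies the semidirect decomposition by inspection. A direct normal-form argument along your lines can be made to work, but as written it has two concrete problems. First, your treatment of $\dim(L/L^2)$ is mistaken: since $\dim L=5$ and $\dim L^2=2$, you have $\dim(L/L^2)=3$ with no choice, and indeed $L_{5,5}$ has a three-dimensional abelianization (it requires the three generators $x_1,x_2,x_4$ modulo $L^2$). Your closing expectation that one can ``show $\dim(L/L^2)=3$ cannot occur'' is an attempt to prove something false, and the case $\dim(L/L^2)=2$ that you treat as the clean one is vacuous in dimension $5$; this makes the sketch internally inconsistent, since the middle of your argument correctly uses $x_4$ as a third generator.

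Second, the heart of the lemma---the normal form for the brackets of $x_4$---is only asserted, and you yourself flag it as the main obstacle. To complete it you would need to: (a) observe that $[x_4,x_1]$ and $[x_4,x_2]$ a priori have nonzero $x_3$-components and remove them by replacing $x_4$ with $x_4+\lambda x_2+\nu x_1$; (b) apply the Jacobi identity to the triple $(x_4,x_1,x_2)$, which forces $[x_4,x_3]=0$; (c) kill the remaining $x_5$-component of $[x_4,x_1]$ by adding a suitable multiple of $x_3$ to $x_4$, using $[x_1,x_3]=x_5$ and $[x_2,x_3]=0$; and (d) invoke stemness, i.e.\ $Z(L)=L^3=\langle x_5\rangle$, to conclude $[x_4,x_2]\neq 0$ and rescale. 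Steps (a) and (b) in particular are not optional: without them the ``enumeration of possible components'' you defer cannot be carried out, since nothing in your sketch rules out an $x_3$-component in $[x_4,x_1]$. Your exclusion of the central-$x_4$ case via stemness and your verification of the decomposition $L=I\rtimes\langle x_4\rangle$ with $[I,\langle x_4\rangle]=\langle x_5\rangle$ are correct once the table is in hand.
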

\begin{proof}
 By the classification  of $5$-dimensional nilpotent Lie algebras  in \cite{Gr}, we get $L\cong L_{5,5}.$
It is easy to check that $L_{5,5}=I\rtimes \langle x_4\rangle  $ such that  $ I=\langle x_1,x_2,x_3,x_5| [x_1, x_2] = x_3, [x_1, x_3] = x_5\rangle\cong L_{4,3}$  and $[I, \langle x_4\rangle]=\langle x_5\rangle.$
\end{proof}
\begin{lem}\label{rr112} Let $L  $  be a nilpotent stem Lie algebra of dimension $ 6 $ such that $ \dim L^2=2 $ and $ cl(L)= 3.$ Then
$L\cong L_{6,10}=\langle x_1,\ldots,x_6| [x_1, x_2] = x_3, [x_1, x_3] = x_6, [x_4, x_5] = x_6\rangle.$
Moreover,
$L_{6,10}=I\dotplus \langle  x_4,x_5,x_6|[x_4, x_5] = x_6 \rangle=I\dotplus K  $ such that  $I=\langle  x_1, x_2, x_3,x_6|[x_1, x_2] = x_3, [x_1, x_3] = x_6 \rangle\cong L_{4,3}$
and $K=\langle  x_4,x_5,x_6|[x_4, x_5] = x_6 \rangle\cong H(1).$
  \end{lem}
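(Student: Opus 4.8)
The plan is to reduce everything to the known classification of $6$-dimensional nilpotent Lie algebras over the ground field, exactly as was done for dimension $5$ in Lemma~\ref{rr11}. First I would record the structural constraints forced by the hypotheses: since $cl(L)=3$ we have $0\neq L^3\subseteq L^2$, and since $\dim L^2=2$ this forces $\dim L^3=1$ and $L^2$ abelian, with $L^3\subseteq Z(L)$. Because $L$ is a stem Lie algebra, $Z(L)\subseteq L^2$, so $\dim Z(L)\in\{1,2\}$; but $\dim Z(L)=2$ would give $L^2=Z(L)$ and hence $cl(L)=2$, a contradiction, so $\dim Z(L)=1$ and therefore $Z(L)=L^3$. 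Consequently $\dim(L/L^2)=\dim L-2=4$ and $L^2/L^3$ is one-dimensional and central in $L/L^3$.

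Next I would invoke the classification of $6$-dimensional nilpotent Lie algebras (over an arbitrary field, using \cite{cic,Gr2,Gr}) and simply read off which isomorphism types satisfy $\dim L^2=2$, $cl(L)=3$, and $Z(L)\subseteq L^2$ (the stem condition). The point is that these numerical invariants ($\dim L=6$, $\dim L^2=2$, $\dim L^3=1$, $\dim Z(L)=1$) are extremely restrictive, and I expect $L_{6,10}$ to be the only survivor; the families in that range with $\dim L^2=2$ are few, and those of class $3$ among them fewer still, with the non-stem ones eliminated by the centre condition. This is the step that does the real work, but it is a finite check against the published tables rather than a computation.

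Finally, for the ``moreover'' part I would exhibit the decomposition explicitly on the given basis. Set $I=\langle x_1,x_2,x_3,x_6\mid [x_1,x_2]=x_3,\ [x_1,x_3]=x_6\rangle$ and $K=\langle x_4,x_5,x_6\mid [x_4,x_5]=x_6\rangle$. From the multiplication table of $L_{6,10}$ one checks directly that $I$ and $K$ are ideals (all brackets of a generator of one with $L$ land inside that summand's span, using that $x_6$ is central), that $I+K=L$ since together the generators span, that $[I,K]=0$ because $[x_i,x_j]=0$ for $i\in\{1,2,3\}$, $j\in\{4,5\}$ and $x_6$ is central, and that $I\cap K=\langle x_6\rangle\subseteq Z(L)$. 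Hence $L_{6,10}=I\dotplus K$ in the sense of Definition~\ref{cent}. It remains to identify $I\cong L_{4,3}$: the assignment $x_1\mapsto x_1$, $x_2\mapsto x_2$, $x_3\mapsto x_3$, $x_6\mapsto x_4$ is visibly a Lie algebra isomorphism onto $L_{4,3}=\langle x_1,\dots,x_4\mid [x_1,x_2]=x_3,\ [x_1,x_3]=x_4\rangle$, and $K\cong H(1)$ is immediate from its presentation. The main obstacle is purely bookkeeping: making sure the cited classification is applied over an arbitrary field (not just algebraically closed), which is why the statement is restricted to dimension $6$ where \cite{cic,Gr,Gr2} provide a field-independent list; no genuine difficulty beyond that is anticipated.
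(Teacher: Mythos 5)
Your proposal is correct and follows essentially the same route as the paper: appeal to the classification of $6$-dimensional nilpotent Lie algebras to identify $L\cong L_{6,10}$, then verify the central product decomposition $I\dotplus K$ directly on the given basis. The extra preliminary observations you make ($\dim L^3=1$, $Z(L)=L^3$ of dimension one) are sound and only sharpen the filtering of the classification tables; the paper simply cites the tables and checks the decomposition as you do.
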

  \begin{proof}
  By the classification  of $6$-dimensional nilpotent Lie algebras  in \cite{Gr}, we get $L\cong L_{6,10}.$
Clearly $ Z(L)= \langle x_6\rangle$ and $L_{6,10}=I+K,$ where $I=\langle  x_1, x_2, x_3,x_6|[x_1, x_2] = x_3, [x_1, x_3] = x_6 \rangle\cong L_{4,3}$ and $K=\langle  x_4,x_5,x_6|[x_4, x_5] = x_6 \rangle \cong H(1).$ Since $ I\cap K= \langle x_6\rangle=Z(I)=Z(L)$ and $[I, K]=0,  $
 we can see $L_{6,10}=I\dotplus K.$
\end{proof}
The following  proposition is an useful instrument in the next.
\begin{prop}\label{48}
Let $L$ be an $n$-dimensional nilpotent stem Lie algebra  of class $3$  $(n\geq 5)$ and $ \dim L^2=2 $ such that $L=I+K,$ where $I$ and $ K $ are two subalgebras of $L,$  $I\cong L_{4,3}$ is the maximal class Lie algebra of dimension $ 4 $ and  $[I,K]\subseteq Z(I)=Z(L).$ Then
\begin{itemize}
\item[$(i)$] If $K$ is a non-trivial abelian Lie algebra such that $K\cap I=0,$ then
$[I,K]=Z(L)  $ and $K\cong A(1).$ Moreover, $L=I\rtimes K\cong  L_{5,5}.$

\item[$(ii)$] Assume  $\dim K^2=1$ and $I\cap K=K^2=Z(L).$
\begin{itemize}
\item[$(a)  $]If $ K^2=Z(K),$ then  $L=I\dotplus K,$ where $n=2m+4.$  Moreover, for $ m=1, $ we have $L=I\dotplus K\cong L_{6,10},$ where $ K\cong H(1). $  For $ m\geq 2, $ we have $L=I\dotplus K\cong I_1\dotplus I_2,$ where $ I_1\cong L_{6,10} $ and $ I_2\cong H(m-1).$
\item[$(b)  $]If $ K^2\neq Z(K),$ then $L=(I\rtimes A)\dotplus K,$ where  $ K\cong H(m) $ and $A\cong A(1),$  $ [I,A]=Z(L)=Z(I)=K^2$ and $ n=2m+5.$ Moreover, $ I\rtimes A\cong L_{5,5}. $
\end{itemize}

\end{itemize}
\end{prop}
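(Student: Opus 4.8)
The plan is to control everything through the linear map recording how $K$ acts on the two generators of $I$, used together with repeated applications of the stem hypothesis. Fix a basis $x_1,\dots,x_4$ of $I\cong L_{4,3}$ with $[x_1,x_2]=x_3$, $[x_1,x_3]=x_4$, so that $I^2=\langle x_3,x_4\rangle$ and $Z(I)=\langle x_4\rangle$. Since $I^2\subseteq L^2$ and $\dim L^2=2=\dim I^2$, one gets $L^2=\langle x_3,x_4\rangle$, $Z(L)=\langle x_4\rangle$, and $L^3=[L,L^2]=\langle x_4\rangle$. For $k\in K$ the brackets $[k,x_1],[k,x_2]$ lie in $[K,I]\subseteq\langle x_4\rangle\subseteq Z(L)$, so by the Jacobi identity $[k,x_3]=[[k,x_1],x_2]+[x_1,[k,x_2]]=0$; hence $[K,I^2]=0$ and $\operatorname{ad}_k$ maps $I$ into $\langle x_4\rangle$ (and, since $[K,K]\subseteq\langle x_4\rangle$ in both cases below, $\operatorname{ad}_k$ maps all of $L$ into $\langle x_4\rangle$). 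The central object is then the linear map
\[\phi\colon K\longrightarrow\operatorname{Hom}(I/I^2,\langle x_4\rangle)\cong\mathbb F^{2},\qquad \phi(k)=\bigl([k,x_1],[k,x_2]\bigr),\]
together with the remark that any $k$ with $\phi(k)=0$ and $[k,K]=0$ lies in $Z(L)=\langle x_4\rangle\subseteq I$.

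For part $(i)$: if $K$ is abelian with $K\cap I=0$, then $\ker\phi\subseteq K\cap Z(L)\subseteq K\cap I=0$, so $\dim K\le2$. If $\dim K=2$, then $\phi$ is onto, and choosing $k_1\in K$ with $[k_1,x_1]=x_4$ and $[k_1,x_2]=0$ one checks that $x_3+k_1$ is central yet not contained in $L^2$ --- impossible since $L$ is stem. Hence $\dim K=1$, so $n=5$, $L=I\rtimes K$, and $[I,K]=\langle x_4\rangle=Z(L)$ because the generator of $K$ is not central. As a $5$-dimensional stem nilpotent Lie algebra of class $3$ with $\dim L^2=2$, $L\cong L_{5,5}$ by Lemma~\ref{rr11}.

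For part $(ii)$: since $\dim K^2=1$, nilpotency gives $K^3=0$, hence $K^2\subseteq Z(K)$; thus $K^2=Z(K)$ forces $K\cong H(m)$, while $K^2\subsetneq Z(K)$ forces $K\cong H(m)\oplus A(j)$ with $j=\dim Z(K)-1\ge1$. In case $(b)$ the restriction $\psi$ of $\phi$ to a complement $A(j)$ of $K^2$ inside $Z(K)$ is injective (its kernel centralizes $L$ and meets $A(j)$ only in $0$), so $j\le2$; and $j=2$ is excluded, since then $\psi$ is onto and the construction above, applied to a suitable element of $A(j)$ in place of $k_1$, again yields a central element outside $L^2$. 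Therefore $j=1$, and $n=\dim I+\dim K-1$ equals $2m+4$ in $(a)$ and $2m+5$ in $(b)$. For the decomposition, put $K^{\ast}=K$ in $(a)$ and $K^{\ast}=H(m)$ (the Heisenberg summand of $K$) in $(b)$, and replace $I$ by $H:=C_L(K^{\ast})$ so that the decomposition becomes a genuine central product. Since $K^{\ast}\cong H(m)$ is generated by $2m$ elements whose adjoint operators map $L$ into $\langle x_4\rangle\cong\mathbb F$, the subspace $H$ is cut out by at most $2m$ linear equations, so $\dim H\ge n-2m$; on the other hand $H\cap K^{\ast}=Z(K^{\ast})=\langle x_4\rangle$, whence $\dim H+2m=\dim(H+K^{\ast})\le n$. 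Thus $\dim H=n-2m$ ($=4$ in $(a)$, $=5$ in $(b)$), $H+K^{\ast}=L$, $[H,K^{\ast}]=0$, and $H\cap K^{\ast}=Z(L)$, so $L=H\dotplus K^{\ast}$. Finally $H\supseteq I^2$, $H$ is nilpotent, and $H^3=L^3\neq0$: in $(a)$ this makes $H$ a $4$-dimensional algebra of maximal class, hence $H\cong L_{4,3}$; in $(b)$ it makes $H$ a $5$-dimensional algebra with $H^2=L^2$ two-dimensional (a one-dimensional $H^2$ would force $H^3=0$) and $Z(H)=\langle x_4\rangle\subseteq H^2$, so $H$ is stem of class $3$ and Lemma~\ref{rr11} gives $H\cong L_{5,5}$; recall from Lemma~\ref{rr11} that $L_{5,5}=L_{4,3}\rtimes A(1)$ with $[L_{4,3},A(1)]$ equal to its center. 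For $m\ge2$ in case $(a)$ one further writes $K^{\ast}=H(m)=H(1)\dotplus H(m-1)$ via Lemma~\ref{fr}, so $L=H\dotplus H(1)\dotplus H(m-1)\cong L_{6,10}\dotplus H(m-1)$ by Lemma~\ref{rr112}.

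The main difficulty is expected to be the structural step in $(ii)$: the decomposition $L=I+K$ one is given need not satisfy $[I,K]=0$, and this cannot in general be repaired by modifying $I$ within itself. The remedy is to pass to the centralizer $C_L(K^{\ast})$ and then force the dimension count to be tight; the ingredient that makes it tight --- and that also yields $j=1$ in $(b)$ and $\dim K=1$ in $(i)$ --- is precisely the stem hypothesis $Z(L)\subseteq L^2$, invoked each time through the recurring fact that $x_3$ plus a suitable element of $Z(K)$ would otherwise be central without lying in $L^2$. Beyond that, the remaining work is routine: checking Jacobi identities, pinning down the isomorphism types of the small algebras that appear, and elementary bookkeeping with central products.
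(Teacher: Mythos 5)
Your proof is correct, and it reaches the decompositions by a genuinely different mechanism than the paper's. The paper works with explicit bases throughout: after using the Jacobi identity to get $[K,x_3]=0$, it repeatedly replaces generators $z_i$ of $K$ (or of the Heisenberg blocks $T_i$ supplied by Lemma \ref{fr}) by $z_i+\alpha_i x_3$ and then takes combinations $\beta z_i'-\alpha z_j'$ to manufacture an element centralizing $I$, which must lie in $Z(L)\subseteq I$ and so contradicts $K\cap I=0$; this simultaneously bounds the abelian piece and forces $[I,K_1]=0$ after the substitutions. You package the same information into the linear map $\phi(k)=([k,x_1],[k,x_2])$ into a $2$-dimensional space: injectivity of $\phi$ on the relevant abelian piece gives the bound $\le 2$ at once, and the stem hypothesis kills the $2$-dimensional case via the explicit central element $x_3+k_1\notin L^2$ --- a cleaner form of the paper's contradiction. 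The more substantial divergence is in the central-product step of $(ii)$: instead of modifying the generators of $K$ to achieve $[I,K_1]=0$ while keeping $I$ fixed, you keep $K^{\ast}$ fixed and replace $I$ by $H=C_L(K^{\ast})$, forcing $L=H\dotplus K^{\ast}$ by the two-sided dimension count $n-2m\le\dim H\le n-2m$, and then identify $H$ from its numerical invariants ($H^3=L^3\neq0$, $H^2=L^2$, $Z(H)=Z(L)$) together with Lemmas \ref{rr11} and \ref{rr112}. Both routes lean on the same classification inputs, and both conclusions are of the same nature: the stated decomposition holds after replacing one of the two given subalgebras by an isomorphic copy inside $L$ (the paper alters $K$, you alter $I$), which is all the proposition's structural claim requires. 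I see no gap.
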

\begin{proof}
\begin{itemize}
\item[ $(i)$]  We have  $[I,K]\subseteq Z(I)=Z(L),$ so $ I$ is an ideal of $L.$ Since $ K\cap I=0 $ and $I\cong  L_{4,3},$  $ \dim K=\dim L-\dim I=n-4 $ and so $L=I\rtimes K,$  where $K\cong A(n-4).$ We claim that $ [I,K ]=Z(I).$  By contrary, assume that  $ [I,K ]=0.  $ Since $ K $ is abelian, we have $ K\subseteq Z(I)=Z(L)\subseteq I.$ Now  $ I\cap K\neq 0,$ so we have  a contradiction. Thus $ [I,K ]=Z(I)=Z(L).$ We know that $ I $ is a Lie algebra of maximal class of dimension $ 4 $ so $ Z(I)=Z(L)=I^3. $  We also have $ \dim L^2=\dim I^2=2. $ Therefore  $ L^2=I^2. $ We claim that $K\cong A(1).  $ \newline
 First assume that $ n=5.$  Lemma \ref{rr11} implies $ L\cong I\rtimes K\cong L_{5,5}$  and since $I\cong L_{4,3},  $ we have $I=\langle x_1,\ldots,x_4|[x_1, x_2] = x_3, [x_1, x_3] = x_4\rangle$ and $Z(I)=\langle x_4\rangle=I^3.$
 Now let $ n\geq 6$  and $ K=\bigoplus_{i=1}^{n-4}\langle z_i\rangle. $ In this case we show that $ K\cap I\neq 0, $ which is a contradiction. So this case does not occur. By using Jacobian identity, for all  $ 1\leq i\leq n-4, $ we have
 \begin{align*}
 [z_i,x_3]=[z_i,[x_1, x_2]]=[z_i,x_1, x_2]+[x_2,z_i, x_1]=0
 \end{align*}
 since $ [z_i,x_1]$  and $[x_2,z_i] $ are central. Thus $[z_i,x_3]= 0$ for all $ 1\leq i\leq n-4.$

 Now, let $[z_i,x_1]= \alpha_i x_4$ with $ \alpha_i\neq  0. $ Putting $ z_i'=z_i+\alpha_i x_3,$ we have $[z_i',x_1]=[z_i+\alpha_i x_3,x_1]=[z_i,x_1]+\alpha_i [x_3,x_1]= \alpha_i x_4-\alpha_i x_4=0. $ So we also obtain $ [z_i',x_3]=0.$ Thus $[z_i',x_3]=[z_i',x_1]= 0$ for all $i$, $ 1\leq i\leq n-4.$
  Now let $ [z_i',x_2]= \alpha x_4$ and  $ [z_j',x_2]=\beta x_4,$ in which $ \alpha\neq 0 $ and $ \beta\neq 0 $ for $ i\neq j$ and fixed $ i,j$.
 Put $ d_i=\beta z_i'-\alpha z_j'.$ We have
 $[d_i,x_2]=[\beta z_i'-\alpha z_j',x_2]=\beta \alpha x_4-\beta \alpha x_4=0$
and so  $[d_i,x_2]=0.$ On the other hand, $ [d_i,x_1]=[d_i,x_2]=[d_i,x_3]=0. $ Therefore $ [d_i,I]=0 $ and hence $ d_i\in Z(L)=Z(I)= \langle x_4\rangle.$ Since
 \begin{align*}
 &d_i=\beta z_i'-\alpha z_j'=\beta(z_i-\alpha_i x_3)-\alpha ( z_j-\alpha_j x_3)\\&=\beta z_i- \alpha z_j + (\alpha\alpha_j-\beta\alpha_i)x_3\in Z(I)
 \end{align*}
so  $0\neq  \beta z_i- \alpha z_j\in K\cap I=0,$ which is a contradiction.  Thus $ n=5, $ $ K\cong A(1) $ and  $L=I\rtimes \langle z_1\rangle$ and $ [x_2,z_1]=x_4, $ as required. Considering the classification of nilpotent Lie algebras of dimension $ 5$ with $ \dim L^2=2 $ given in \cite{Gr} and Lemma \ref{rr11}, we should have $L\cong L_{5,5}.$
\item[$ (ii) $]
Since
 $I\cap K=K^2=Z(L)=Z(I)\cong A(1),$  $\dim (K)=\dim (L)-\dim (I)+\dim (I\cap K)=n-4+1=n-3. $ We know that $ \dim  K^2=1, $ so \cite[Theorem 3.6]{ni3} implies  $ K\cong K_1\oplus A, $ in which $K_1\cong H(m)$ and $A\cong A(n-2m-4).$ If  $ A=0,$ then $K \cong H(m).$
  Since
 $I\cap K=K^2=Z(L)=Z(I)=Z(K)\cong A(1),$  $\dim (K)=\dim (L)-\dim (I)+\dim (I\cap K)=n-4+1=n-3.  $  Now since   $2m+1=\dim (K)=n-3,$ we have $ n=2m+4. $ We are going to show that  $ [I,K]=0. $ In fact, we show that there exists $ I_1\cong L_{3,4} $ and $ K_2\cong H(m)$ with $ [I_1,K_2]=0 $ and $ L=I_1\dotplus K_2. $
 First let $ m=1. $ We have $ \dim L=6 $ and $ K=\langle x,y,x_4|[x,y]=x_4\rangle, $ since $ K\cong H(1).$ By looking the classification of nilpotent Lie algebras of dimension $ 6$ with $ \dim L^2=2 $ given in \cite{cic} and Lemma \ref{rr112},  we should have $L\cong L_{6,10}.$
  Now, let $ m\geq 2$ and $ H(m)=\langle a_1,b_1,\ldots, a_m,b_m,z\big{|}[a_l,b_l]=z,1\leq
l\leq m\rangle.$ Lemma \ref{fr} implies that $H(m)=T_1\dotplus \ldots \dotplus T_m,$ in which  $T_i\cong H(1) $
 for all  $ 1\leq i \leq m.$ With the same procedure as case in $ (i) $ and changing the variables we can see that $ [T_i,I]= 0$ for all $i$, $ 1\leq i\leq m.$
So $[I,K]=0$ and hence  $ L=I \dotplus K.$ Since $ m\geq 2, $ we have $ L=(I\dotplus T_1)\dotplus (T_2 \dotplus \ldots \dotplus T_m)$ such that $ I\dotplus T_1\cong L_{6,10}$ and $T_2 \dotplus \ldots \dotplus T_m\cong H(m-1),  $ as required.
The case $ (a) $ is completed.\\
 Now, let $ A\neq 0$ and so  $n\neq 2m-4.$ Thus $L=I+( K_1\oplus A)$ such that $[I,K]\subseteq Z(L)=Z(I).$ We are going to show that $ A\cong A(1), $ $ [I,K_1]=0 $ and $ [I,A]=Z(I)=Z(L). $ Similar to the part $ (ii), $ we can see that  $[I,K_1]=0.$ We claim that  $ [I,A]\neq 0.  $ By contrary, let  $[A,K_1]=[I,A]=0  $ and so $ A\subseteq Z(L)=Z(I). $ Since  $ A\cap I=0,$ we have $ A=0, $ which is a contradiction. So we have $ [I,A]=Z(L).$ We claim that $ \dim A=1. $ Let  $ \dim A\geq 2. $ Similar to the proof of the  part $ (i),$ we have    $ [a_1,x_1]=[a_2,x_1]=[a_1,x_3]=[a_2,x_3]=0$  where $ a_1,a_2\in A $ and $a_1\neq a_2.  $ Now let $[a_1,x_2]= \alpha x_4$ and  $ [a_2,x_2]=\beta x_4$ such that $ \alpha\neq 0 $ and $ \beta\neq 0. $
 Putting $ a_1'=\beta a_1-\alpha a_2.$ We have
 \begin{align*}
 [a_1',x_2]=[\beta a_1-\alpha a_2,x_2]=\beta \alpha x_4-\beta \alpha x_4=0
 \end{align*}
and so  $[a_1',x_2]=0.$ Hence $ [a_1',x_1]=[a_1',x_2]=[a_1',x_3]=0. $ Therefore $ [a_1',I]=0 $ and hence $ a_1'\in Z(L)=Z(I)= \langle x_4\rangle=K_1^2.$ So $ a_1'\in K_1 $ and since $ K_1\cap A=0,$ we have  a contradiction. Hence $ A\cong A(1) $ and so $n=2m+5.$
   Thus $ L=(I\rtimes A)\dotplus K_1 $ such that  $[I,A]=Z(L)=Z(I).$ By part $(i),$ we have $ I\rtimes A\cong L_{5,5}. $ The case $ (b) $ is completed. The result follows.
\end{itemize}
\end{proof}
We need the following lemma for the next investigation.
\begin{lem}\label{z}\cite[Lemma 1]{K} Let $L$ be a nilpotent Lie algebra and $H$ be a subalgebra of $L$ such that $L^2 =
H^2 + L^3.$ Then
$L^i = H^i$ for all $i \geq 2.$
Moreover, $H$ is an ideal of $L.$
\end{lem}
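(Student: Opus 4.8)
The plan is to reduce everything to the single equality $L^2 = H^2$; once this is in hand, both displayed conclusions come out quickly. Since $L$ is nilpotent we have $L^m = 0$ for some $m$, so it suffices to show that the hypothesis propagates up the lower central series, i.e. that $L^2 = H^2 + L^m$ for every $m \geq 3$. I would prove this by induction on $m$, the case $m = 3$ being the hypothesis. Assuming $L^2 = H^2 + L^m$, bracketing with $L$ gives $L^3 = [L, L^2] = [L, H^2] + [L, L^m] = [L, H^2] + L^{m+1}$, hence
\[
L^2 = H^2 + L^3 = H^2 + [L, H^2] + L^{m+1}.
\]
The crucial point is to absorb the term $[L, H^2]$: by the Jacobi identity $[L, [H,H]] \subseteq [[L,H], H]$, and since $[L,H] \subseteq L^2 = H^2 + L^m$ we get
\[
[L, H^2] \subseteq [H^2 + L^m, H] = [H^2, H] + [L^m, H] \subseteq H^3 + L^{m+1} \subseteq H^2 + L^{m+1}.
\]
Substituting back yields $L^2 \subseteq H^2 + L^{m+1} \subseteq L^2$, which closes the induction. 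Taking $m$ large enough that $L^m = 0$ gives $L^2 = H^2$.

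The ideal statement is then immediate: $[L, H] \subseteq [L, L] = L^2 = H^2 \subseteq H$. For the chain $L^i = H^i$ I would use (strong) induction on $i \geq 2$, the base case $i = 2$ being what was just proved. For the step, the inductive hypothesis gives $L^{i+1} = [L, L^i] = [L, H^i]$, and one more application of the Jacobi identity,
\[
[L, [H, H^{i-1}]] \subseteq [[L,H], H^{i-1}] + [H, [L, H^{i-1}]] \subseteq [H^2, H^{i-1}] + [H, H^i] \subseteq H^{i+1},
\]
gives $[L, H^i] \subseteq H^{i+1}$; here one uses $[L,H] \subseteq L^2 = H^2$ together with $[L, H^{i-1}] = [L, L^{i-1}] = L^i = H^i$ when $i \geq 3$, and $[L, H^{i-1}] = [L, H] \subseteq H^2$ directly when $i = 2$. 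Combined with the trivial inclusion $H^{i+1} = [H, H^i] \subseteq [L, H^i]$, this yields $L^{i+1} = H^{i+1}$.

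I expect the inductive step in the first paragraph to be the only genuine obstacle. The temptation is to try to eliminate $L^3$ in one stroke; instead one must keep pushing the tail $L^3, L^4, \dots$ further down the lower central series and, at each stage, use the Jacobi identity together with $[L, H] \subseteq L^2$ to reabsorb $[L, H^2]$ into $H^2$ (plus a still higher term), letting nilpotency finish the job. Everything after $L^2 = H^2$ is routine.
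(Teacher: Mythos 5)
Your argument is correct: the induction $L^2 = H^2 + L^m$ for all $m$, with the Jacobi identity used to reabsorb $[L,H^2]$ via $[L,H]\subseteq L^2 = H^2 + L^m$, is exactly the right mechanism, and the deductions that $H$ is an ideal and that $L^i = H^i$ for $i\geq 2$ then go through as you write them. Note that the paper itself gives no proof of this lemma — it is quoted verbatim from Zack \cite[Lemma 1]{K} — so there is nothing in the paper to compare against; your write-up is a complete, self-contained proof of the cited result and follows the standard line of argument.
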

In the following, we determine the central factor of all stem Lie algebras $ T $ such that $cl(T)=3$ and $\dim T^2=2.$

  \begin{lem}\label{ggg}
   Let $ T $ be an $ n $-dimensional stem Lie algebra   such that $cl(T)=3$ and $\dim T^2=2.$ Then $ Z(T)=T^3\cong A(1)$ and $T/Z(T)\cong H(1)\oplus A(n-4).$
  \end{lem}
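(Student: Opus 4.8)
The plan is to first pin down $Z(T)$ and $T^3$, and then identify the quotient $T/Z(T)$ as a generalized Heisenberg Lie algebra of rank one, which is forced to be $H(1)\oplus A(n-4)$.

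First I would observe that since $cl(T)=3$ we have $T^3\neq 0$ and $T^4=0$, so $T^3\subseteq Z(T)$. Because $\dim T^2 = 2$ and the lower central factors $T^2/T^3$ and $T^3/T^4 = T^3$ are both nonzero, we get $\dim T^3 = 1$ and $\dim(T^2/T^3)=1$. Next I would show $Z(T)=T^3$, i.e. $\dim Z(T)=1$. If $\dim Z(T)\geq 2$, pick $z\in Z(T)$ with $z\notin T^3$; then $T/\langle z\rangle$ is a Lie algebra with $(T/\langle z\rangle)^2 = (T^2+\langle z\rangle)/\langle z\rangle$ of dimension $2$ (since $z\notin T^2$ as $Z(T)\cap$ a complement argument, or more directly: $z\notin T^3$ and if $z\in T^2$ then $T^2 = T^3\oplus\langle z\rangle$ would force $T^3 = [T,T^2]\subseteq\langle z\rangle$... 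I should be careful here and instead argue via stem-ness). The cleaner route: $T$ is a stem Lie algebra, meaning $Z(T)\subseteq T^2$. So $Z(T)\subseteq T^2$ with $\dim T^2 = 2$. If $\dim Z(T)=2$ then $Z(T)=T^2$, forcing $cl(T)=2$, a contradiction. Hence $\dim Z(T)=1$, and since $T^3\subseteq Z(T)$ with $\dim T^3 = 1$, we conclude $Z(T)=T^3\cong A(1)$.

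Then I would analyze $\bar T := T/Z(T) = T/T^3$. We have $\bar T^2 = T^2/T^3$, so $\dim\bar T^2 = 1$, and $\bar T^3 = T^3/T^3 = 0$, so $\bar T$ is nilpotent of class exactly $2$ with one-dimensional derived subalgebra. By the structure theorem for such Lie algebras (this is the statement quoted in the excerpt as \cite[Theorem 3.6]{ni3}, or equivalently the classification of Lie algebras with $\dim L^2 = 1$), $\bar T \cong H(k)\oplus A(r)$ for some $k\geq 1$ and $r\geq 0$ with $2k+1+r = n-1$. It remains to show $k=1$. Here I would use that $cl(T)=3$: suppose $k\geq 2$. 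Writing $\bar T^{(ab)} = \bar T/\bar T^2$, the Heisenberg summand $H(k)$ contributes $2k$ generators $\bar a_1,\bar b_1,\dots,\bar a_k,\bar b_k$ with $[\bar a_l,\bar b_l] = \bar z$ the generator of $\bar T^2 = T^2/T^3$. Lifting these to $a_l, b_l\in T$ and a generator $z_1$ of $T^2/T^3$, we would have $[a_l,b_l] = z_1 + (\text{element of } T^3)$ for each $l$. Now $T^3 = [T,T^2]$ is spanned by brackets $[x, z_1]$ for $x\in T$. The Jacobi identity gives $[z_1, z_1] = 0$ and, crucially, relations among the $[a_l, b_l, x]$; I would combine these with the fact that $\dim T^3 = 1$ to derive that the bracket structure is too constrained to have two independent Heisenberg blocks actually detecting elements of $T^3$ nontrivially — more precisely, I would show that a second Heisenberg block $H(k-1)$ (with $k\geq 2$) splits off as a direct summand $H(k-1)$ of $T$ itself, which is a class-$2$ ideal, and then $T$ decomposes as $T\cong I \oplus H(k-1)$ with $\dim I^2 = 2$, $cl(I) = 3$, and $\dim I$ smaller; but since $T$ is stem and $H(k-1)$ ($k\geq 2$) contributes a center, $Z(T)\supseteq Z(H(k-1))\neq 0$ disjoint from... actually the slickest contradiction: a direct summand $H(k-1)$ with $k-1\geq 1$ has its own center contained in its own derived subalgebra, which would be a center element of $T$ lying in $T^2$ but the resulting algebra would not be stem unless that center coincides with $Z(T) = T^3$, impossible since $H(k-1)^2\not\subseteq T^3$ once we track degrees. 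This forces $k=1$, hence $\bar T\cong H(1)\oplus A(n-4)$.

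The main obstacle I anticipate is exactly the last step — ruling out $k\geq 2$ cleanly. The delicate point is that lifting a Heisenberg decomposition of the quotient $T/T^3$ back to $T$ requires controlling where the extra brackets land in the one-dimensional $T^3$, and showing that all but one Heisenberg block can be chosen (after a change of basis of the $a_l, b_l$, as in the variable changes performed in Proposition \ref{48}) to commute with everything modulo $Z(T)$ and in fact to split off. I expect the argument to run parallel to the Jacobi-identity-plus-change-of-variables computations already used in the proof of Proposition \ref{48}(i), together with Lemma \ref{z} to promote subalgebras to ideals, so the technical machinery is all in place; assembling it into a contradiction with stem-ness (via $Z(T) = T^3$) is the crux.
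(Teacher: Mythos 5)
Your first step is correct and is essentially the paper's: stem-ness gives $Z(T)\subseteq T^2$, the strict chain $T^2\supsetneq T^3\supsetneq T^4=0$ forces $\dim T^3=1$, and $Z(T)=T^2$ would force class $2$, so $Z(T)=T^3\cong A(1)$ and $\dim\bigl(T/Z(T)\bigr)^2=1$, whence $T/Z(T)\cong H(k)\oplus A(r)$. The gap is exactly the step you flag as the crux: you never actually rule out $k\geq 2$, and the route you sketch would not work. You propose to split a second Heisenberg block off as a \emph{direct} summand of $T$ and contradict stem-ness because ``$H(k-1)^2\not\subseteq T^3$.'' But all $k$ pairs $(\bar a_l,\bar b_l)$ in $H(k)$ have the \emph{same} bracket $\bar z$, so their lifts satisfy $[a_l,b_l]\equiv z_1\pmod{T^3}$ for one and the same $z_1$: the blocks are glued along $T^2$ and cannot be separated into independent direct summands. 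Indeed, in the actual structure theorem (Theorem \ref{lkl}) Heisenberg factors do occur in $T$, as central-product factors whose derived subalgebra is exactly $T^3=Z(T)$, without any violation of stem-ness; so the contradiction you are reaching for is not there.

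The statement is true and there are two ways to close the gap. The paper's way is a one-liner that you missed even though you cite the relevant theorem: $T/Z(T)$ is \emph{capable} by the very definition of capability (it is a central quotient of $T$), and \cite[Theorem 3.6]{ni3} asserts not only that an algebra with one-dimensional derived subalgebra is $H(m)\oplus A(r)$, but also that such an algebra is capable only when $m=1$; hence $k=1$ immediately. Alternatively, a direct argument succeeds where your sketch does not: if $k\geq 2$, then for every generator $x$ of $T$ modulo $T^2$ one can choose a pair $(a_l,b_l)$ with $\bar x$ commuting with $\bar a_l$ and $\bar b_l$ in $T/T^3$; then $[b_l,x]$ and $[x,a_l]$ lie in $T^3\subseteq Z(T)$, so the Jacobi identity gives $[z_1,x]=[[a_l,b_l],x]=0$. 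Since the centralizer of $z_1$ is a subalgebra containing a generating set, $z_1\in Z(T)$, hence $T^2\subseteq Z(T)$ and $cl(T)=2$, a contradiction. Either route should replace your splitting-off argument.
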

  \begin{proof}
  Since $T$ is stem, we have  $  A(1)\cong T^3  \subseteq Z(T)\subsetneqq T^2.$ Thus
$ Z(T)=T^3\cong A(1).$ This follows  $T^2/Z(T)\cong  A(1).$
Since $ T/Z(T) $ is capable, \cite[Theorem 3.6]{ni3} implies that $T/Z(T)\cong H(1)\oplus A(n-4).$ It completes the proof.
  \end{proof}
 In the following theorem, we determine the structure of all stem Lie algebras of class $ 3 $ with the derived subalgebra of dimension $ 2.$
  \begin{thm}\label{lkl}
    Let $ T $ be an $ n $-dimensional stem Lie algebra   such that $cl(T)=3$ and $\dim T^2=2.$ Then one of the following holds
\begin{itemize}
\item[$(a)$] $ T\cong L_{4,3}.$
\item[$ (b) $]
$ T\cong  I\rtimes K \cong L_{5,5}$ where $K\cong A(1),$ $I\cong  L_{4,3} $ and $Z_2( T) =Z_2( I)\rtimes K. $
\item[$ (c) $]
$ T\cong  I\dotplus I_1,$ where $ I_1\cong H(m), $
$Z_2( T) =Z_2( I)+I_1,$ $I\cong  L_{4,3} $ and $n=2m+4.$ Moreover if $ m\geq 2, $ then $L\cong L_{6,10} \dotplus I_2,$ where $I_2\cong H(m-1),$ if $ m=1, $ then $L\cong L_{6,10}.$
\item[$ (d )$]
$ T\cong ( I\rtimes  K)\dotplus I_1\cong L_{5,5}\dotplus I_1  ,$ where $ I_1\cong H(m),~K\cong A(1),$ $I\cong  L_{4,3}, $
$Z_2( T) =(Z_2( I)\rtimes  K)\dotplus I_1$  and $n=2m+5.$
\end{itemize}
Moreover, in the cases  $ (b), $ $(c) $ and  $(d),$  $ Z(T)=Z(I)=I_1^2=[I,K]. $
  \end{thm}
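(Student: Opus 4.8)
The plan is to run an induction on $n=\dim T$, using Lemma~\ref{ggg} as the structural engine and Proposition~\ref{48} to assemble the pieces. The base cases are small: if $n=4$ then $T$ is stem of class $3$ with $\dim T^2=2$, hence of maximal class, and by the quoted classification $T\cong L_{4,3}$, giving $(a)$; the cases $n=5$ and $n=6$ are exactly Lemmas~\ref{rr11} and~\ref{rr112}, yielding the $m=1$ instances of $(b)$ and $(c)$ respectively, together with the description of $Z_2(T)$. So I would set these aside and treat $n\geq 5$ uniformly by exhibiting a decomposition $T=I+K$ to which Proposition~\ref{48} applies.

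The key step is to produce the subalgebra $I\cong L_{4,3}$ inside $T$. Since $cl(T)=3$ and $\dim T^2=2$, we have $T^3=Z(T)\cong A(1)$ and $T^2/T^3\cong A(1)$ by Lemma~\ref{ggg}; pick $x_1,x_2\in T$ with $[x_1,x_2]=x_3$ spanning $T^2$ mod $T^3$, and $x_4$ spanning $T^3$. The Jacobi identity forces $[x_1,x_3]$ and $[x_2,x_3]$ to lie in $T^3$, and because $T$ genuinely has class $3$ at least one of these is nonzero; after a change of basis we may take $I=\langle x_1,x_2,x_3,x_4\rangle$ with $[x_1,x_2]=x_3$, $[x_1,x_3]=x_4$, i.e. $I\cong L_{4,3}$. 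Here $H:=I$ satisfies $H^2+T^3 = \langle x_3,x_4\rangle = T^2$, so Lemma~\ref{z} applies and gives $T^2=I^2$, $T^3=I^3$, and crucially that $I$ is an ideal of $T$; in particular $Z(I)=I^3=T^3=Z(T)$ and $[I,T]\subseteq T^2=I^2$. To get the stronger containment $[I,K]\subseteq Z(I)$ demanded by Proposition~\ref{48} I would choose a complement $K$ to $I$ adapted to the structure: by Lemma~\ref{ggg}, $T/Z(T)\cong H(1)\oplus A(n-4)$, the $H(1)$ summand being $I/Z(T)$, and I would lift the $A(n-4)$ part to a subspace whose bracket with $I$ is controlled. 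Then the mutual position of $I$ and $K$ — namely whether $K$ is abelian, whether $\dim K^2=1$ with $K^2=Z(K)$, or $K^2\neq Z(K)$ — falls into exactly the three alternatives of Proposition~\ref{48}, producing $(b)$ (with $K\cong A(1)$, $T\cong L_{5,5}$), $(c)$ (with $T\cong I\dotplus H(m)$, refined via Lemma~\ref{fr} to $L_{6,10}\dotplus H(m-1)$ when $m\geq 2$), and $(d)$ (with $T\cong L_{5,5}\dotplus H(m)$).

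The descriptions of $Z_2(T)$ in each case then come cheaply from the decompositions: in a central product or semidirect product of the stated shape, $Z_2$ of the whole is the internal sum of $Z_2$ of the $L_{4,3}$ (or $L_{5,5}$) factor with the Heisenberg summand, since the Heisenberg summand has class $2$ and hence lies in $Z_2$, while $Z_2(L_{4,3})=\langle x_2,x_3,x_4\rangle=L_{4,3}^2+\langle x_2\rangle$ and $Z_2(L_{5,5})=Z_2(I)\rtimes K$; one checks these are genuinely the full second centres by a direct bracket computation. The final sentence, $Z(T)=Z(I)=I_1^2=[I,K]$, is already recorded inside Proposition~\ref{48}$(i)$,$(ii)$ for each branch — $Z(I)=I^3=T^3=Z(T)$ from Lemma~\ref{z}, $I_1^2=Z(T)$ because $I_1$ is the Heisenberg summand sharing its centre with $T$, and $[I,K]=Z(T)$ is the nontriviality conclusion of Proposition~\ref{48} — so this amounts to collecting facts already in hand.

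I expect the main obstacle to be choosing the complement $K$ so that $[I,K]\subseteq Z(I)=Z(T)$ rather than merely $[I,K]\subseteq T^2$; the naive lift of the abelian part of $T/Z(T)$ need not have this property, and one must use the freedom to modify $K$ by elements of $I$ (as in the $z_i'=z_i+\alpha_i x_3$ and $d_i=\beta z_i'-\alpha z_j'$ manoeuvres already appearing in the proof of Proposition~\ref{48}$(i)$) to kill the components of $[I,K]$ landing in $T^2\setminus T^3$. Once that normalization is done, everything else is an application of results already in the excerpt.
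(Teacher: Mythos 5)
Your overall architecture --- Lemma \ref{ggg} to get $T/Z(T)\cong H(1)\oplus A(n-4)$ and $Z(T)=T^3\cong A(1)$, Lemma \ref{z} to identify an ideal $I\cong L_{4,3}$ with $I^2=T^2$, and Proposition \ref{48} to sort the complement into the three alternatives --- is the paper's. But the step you yourself flag as ``the main obstacle'', namely producing a complement $K$ with $[I,K]\subseteq Z(I)$, is left open, and the basis manoeuvres you borrow from Proposition \ref{48}$(i)$ are not designed for that purpose: there they serve to show the abelian complement is one-dimensional \emph{after} $[I,K]\subseteq Z(I)$ is already in force. The paper avoids the issue entirely by taking $I_1$ and $I_2$ to be the preimages in $T$ of the two \emph{ideal} direct summands of $T/Z(T)\cong H(1)\oplus A(n-4)$; since both are ideals with $I_1\cap I_2=Z(T)$, one gets $[I_1,I_2]\subseteq I_1\cap I_2=Z(T)=Z(I_1)$ for free, and Lemma \ref{z} applied to $T^2=I_1^2+T^3$ gives $cl(I_1)=3$, so the four-dimensional ideal $I_1$ is of maximal class and hence isomorphic to $L_{4,3}$ with no basis computation. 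If you insist on your hand-built $I$, the clean fix is to note that the centralizer of $I/Z(T)$ in $T/Z(T)$ is exactly $Z(T/Z(T))=Z_2(T)/Z(T)$, of dimension $n-3$, meeting $I/Z(T)$ in the one-dimensional $T^2/Z(T)$; hence $T=I+Z_2(T)$, and any complement chosen inside $Z_2(T)$ satisfies $[I,K]\subseteq Z(T)=Z(I)$ by definition of $Z_2$.

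Two further soft spots. First, your construction of $I$ assumes generators $x_1,x_2$ with $[x_1,x_2]=x_3\notin T^3$ can be chosen so that one of $[x_1,x_3],[x_2,x_3]$ is nonzero; but $T^3=[T,\langle x_3\rangle]\neq 0$ only yields some $t\in T$ with $[t,x_3]\neq 0$, and $t$ need not lie in $\langle x_1,x_2\rangle$, so the generating pair must be adjusted (which changes $x_3$) --- routine, but not automatic, and again bypassed by the paper's use of Lemma \ref{z}. Second, your parenthetical $Z_2(L_{4,3})=\langle x_2,x_3,x_4\rangle$ is wrong: since $L_{4,3}$ is of maximal class, $Z_2(L_{4,3})=L_{4,3}^{2}=\langle x_3,x_4\rangle$ (indeed $[x_2,x_1]=-x_3\notin Z(L_{4,3})$), and the correct identity used in the paper is $Z_2(T)=Z_2(I_1)+I_2=I_1^2+I_2$, obtained from $Z_2(T)/Z(T)=Z(T/Z(T))$.
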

\begin{proof}
Since $ cl(T)=3, $ we have $\dim T\geq 4.  $ If $\dim T=4,$ then $ T $ must be a Lie algebra of maximal class and hence $T\cong L_{4,3}.$  Assume that $\dim T\geq 5.$ We have $T/Z(T)\cong H(1)\oplus A(n-4)$ and $Z(T)=T^3\cong A(1),$ by Lemma \ref{ggg}.
There exist  ideals $ I_1/Z(T)$ and $ I_2/Z(T)$   of $ T/Z(T)$ such that
\[  I_1/Z(T)\cong H(1) ~ \text{and}~
I_2/Z(T)\cong A(n-4).\]
Since $ T^2/Z(T)=\big{(}I_1^2+Z(T)\big{)}/Z(T),  $ we have $ T^2=I_1^2+Z(T)  $ and $ Z(T)=T^3.$
Using Lemma \ref{z}, we have $ T^2=I_1^2 $ and so $cl(T)=cl(I_1)=3.$ Hence $ I_1$ is a Lie algebra of maximal class and since  $\dim I_1=4,$  we have  $I_1\cong L_{4,3}.$ Now, $ Z(T)=Z(I_1)$ because $Z(T)\cap   I_1\subseteq Z( I_1)  $ and $ \dim Z(T)=1. $ Since $ Z(T)\subseteq   I_1\cap I_2 \subseteq Z(T),$ we have $ I_1\cap I_2= Z(T)=Z(I_1). $ Now we are going to determine the structure of $ I_2. $ We have $I_2/Z(T)\cong A(n-4)$ so  $I_2^2\subseteq Z(T)\cong A(1),$ and hence  $cl(I_2) \leq 2$ and $ [I_1,I_2]\subseteq I_1\cap I_2 = Z(T)=Z(I_1)\cong A(1).$ We have $ \dim T/Z(T)\geq 4 $ and so $ \dim I_2\geq 2. $
  Let $cl(I_2)=1.$ Therefore $[I_1,I_2]=I_1\cap I_2=Z(T),$ otherwise $[I_1,I_2]=0$  and since $  I_2$ is abelian,  $ I_2\subseteq Z(T)\cong A(1).$ It  is a contradiction, since $ \dim I_2\geq 2. $ Hence  $ I_2=Z(T) \oplus A$, where $A\cong A(n-4)$ and $[I_1,I_2]=Z(T).$ Now $ Z(T)\subseteq I_1, A\cap I_1=0 $ and $ I_1\cap I_2 = Z(T) $ so $ T= I_1+I_2=I_1+Z(T)+A=I_1\rtimes A.$ Using the proof of Proposition \ref{48} $ (i),$  we have $ T\cong I_1\rtimes K\cong L_{5,5} $ in which $K\cong A(1)$ and $ [K,I_1]=Z(T). $ This is  the case $ (b). $ \\ Now, let $ cl(I_2)=2.$ Since $I_2^2= I_1\cap I_2=Z(T)=Z(I_1)\cong A(1),$  by \cite[Theorem 3.6]{ni3}, we have $I_2\cong H(m)\oplus A(n-2m-4).$ First assume that $ A(n-2m-4)=0.$ Then $n=2m-4$ and $ I_2\cong H(m).$ Using Proposition \ref{48} $ (ii) (a),$ we can similarly prove $[I_1,I_2]=0$  and $ T=I_1\dotplus I_2 $ where $ I_2\cong H(m).  $ This is the case $ (c). $
   Now, let  $  A(n-2m-4)\neq 0. $ Then $n\neq 2m-4$ and hence $T=I_1+(K\oplus A)$ where $K\cong H(m)$ and  $A\cong A(n-2m-4)$ and $[I_1,K\oplus A]\subseteq Z(T)=Z(I_1).$ Similar to the case $ (c), $ we have  $[I_1,K]=0.$ Now we claim that $ [I_1,A]=Z(T)\cong A(1). $ Let $ [I_1,A]=0. $ Since $ [K,A]=0, $ we have  $ A\subseteq Z(T)=Z(I_1)=Z(K)=K^2\cong A(1).$ It is a contradiction, since $ A\cap K=0. $ Therefore $[I_1,A]=Z(T)  $ and hence $ T\cong (I_1\rtimes A)\dotplus K $ where $ A\cong A(n-2m-4)$ and  $ [I_1,A]=Z(T)=Z(I_1).$ Similar to the case $ (b), $ one can obtain that $ A\cong A(1), $ so $ n-2m-4=1, $  $n=2m+5  $ and  $ [I_1,A]=Z(T). $ So $ T=(I_1\rtimes A)\dotplus K $ in which $ A\cong A(1) $ and $ [I_1,A]=Z(T). $ This is the case $ (d). $\\ Now, we have
 \[Z_2(T)/Z(T)=Z(T/Z(T))=Z(I_1/Z(T))\oplus I_2/Z(T~)\text{ and}~ Z(T)=Z(I_1)\] also $ Z(I_1/Z(T))=I_1^2/Z(T),$ so $Z_2(T)/Z(T)=I_1^2/Z(T)\oplus I_2/Z(T).  $   Since $ I_1 $ is maximal class of dimension $ 4, $ we have
   $Z_2( T) =Z_2( I_1)+I_2=I_1^2+I_2.$ The result follows.
\end{proof}
In the following theorem, we classify all non-capable stem Lie algebras of class $ 3 $ with the derived subalgebra of dimension $ 2.$
\begin{thm}\label{171}
Let $ T $ be an $ n $-dimensional stem Lie algebra   such that $cl(T)=3,$  $\dim T^2=2$ and $n\geq 6.$ Then $T\cong( I\rtimes  K)\dotplus H$  or $ T\cong I\dotplus H$ such that $H\cong H(m), K\cong A(1),$  $I\cong L_{4,3}$ and  $[K ,I]= Z(T)=Z(I)=H^2.$ Moreover, $T$ is non-capable.
\end{thm}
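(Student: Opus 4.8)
The plan is to read the structural dichotomy straight off Theorem \ref{lkl} and then to obtain non-capability from Proposition \ref{Hi}, so the whole argument is really just bookkeeping on top of Theorem \ref{lkl}.

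First I would invoke Theorem \ref{lkl}: an $n$-dimensional stem Lie algebra $T$ with $cl(T)=3$ and $\dim T^2=2$ falls into one of the cases $(a)$--$(d)$ listed there. Cases $(a)$ and $(b)$ force $\dim T=4$ and $\dim T=5$ respectively, so under the hypothesis $n\geq 6$ only $(c)$ and $(d)$ can occur. Case $(c)$ gives $T\cong I\dotplus H$ with $I\cong L_{4,3}$, $H\cong H(m)$ and $n=2m+4$, while case $(d)$ gives $T\cong (I\rtimes K)\dotplus H$ with $I\cong L_{4,3}$, $K\cong A(1)$, $H\cong H(m)$ and $n=2m+5$; since $n\geq 6$ forces $m\geq 1$ in both cases, and $\{2m+4\}\cup\{2m+5\}$ over $m\geq 1$ exhausts all integers $\geq 6$, this is precisely the asserted alternative. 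The final sentence of Theorem \ref{lkl} then records $[K,I]=Z(T)=Z(I)=H^2$ (and $Z(T)=Z(I)=H^2$ in case $(c)$), completing the structural part.

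Next I would prove non-capability for both cases simultaneously by writing $T=A\dotplus H$, where $A=I$ in case $(c)$ and $A=I\rtimes K$ in case $(d)$. In either situation $I$ is an ideal of $A$, so $I^2\subseteq A^2$; and since $I\cong L_{4,3}$ is of maximal class of dimension $4$, one has $Z(I)=I^3$, hence $Z(I)=I^3\subseteq I^2\subseteq A^2$. By Theorem \ref{lkl}, $H^2=Z(T)=Z(I)$, so $H^2\subseteq A^2$; as $H\cong H(m)$ with $m\geq 1$, the ideal $H^2$ is one-dimensional and nonzero. Therefore $A^2\cap H^2=H^2\neq 0$, and Proposition \ref{Hi} applied to the central product $T=A\dotplus H$ yields $A^2\cap H^2\subseteq Z^{\wedge}(T)$ and hence that $T$ is non-capable.

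There is no genuine obstacle: Theorem \ref{lkl} carries all the structural content, and checking the hypothesis of Proposition \ref{Hi} reduces to the single observation that the one-dimensional centre $Z(T)=H^2=Z(I)=I^3$ lies inside the derived subalgebra of the ``$L_{4,3}$-part'' $A$, which is immediate because $I$ is an ideal of $A$. The only thing requiring care is keeping cases $(c)$ and $(d)$ aligned with the notation and the ``moreover'' clause of Theorem \ref{lkl}, in particular making sure that in case $(d)$ the passage from $I$ to $I\rtimes K$ does not move $Z(I)$ out of $A^2$ — and it does not, since $I^3\subseteq I^2\subseteq (I\rtimes K)^2$.
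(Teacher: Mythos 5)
Your proposal is correct and follows the same route as the paper: restrict Theorem \ref{lkl} to cases $(c)$ and $(d)$ using $n\geq 6$, then apply Proposition \ref{Hi} to the central product decomposition. The only difference is that you explicitly verify the hypothesis $A^{2}\cap H^{2}\neq 0$ via $H^{2}=Z(T)=Z(I)=I^{3}\subseteq I^{2}\subseteq A^{2}$, a check the paper leaves implicit; this is a welcome addition, not a deviation.
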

\begin{proof}
By Theorem \ref{lkl} $(c) $ and  $(d) $, we obtain $( I\rtimes  A)\dotplus H$  or $ I\dotplus H$ such that $H\cong H(m), A\cong A(1),$ $I\cong L_{4,3}$ and  $[A ,I]= Z(T)=Z(I)=H^2.$
 By using  Proposition \ref{Hi},
$T$ is non-capable.
 The result follows.
\end{proof}
The capable stem Lie algebras of class $ 3 $ with the derived
subalgebra of dimension $ 2 $ are characterized as following.
\begin{lem}\label{cc1}
$L_{4,3}  $ and $ L_{5,5} $ are  capable.
\end{lem}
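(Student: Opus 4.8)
The plan is to show that each of the two Lie algebras $L_{4,3}$ and $L_{5,5}$ is capable by exhibiting, for each, a Lie algebra $H$ with $H/Z(H)$ isomorphic to the algebra in question, or equivalently (invoking the machinery already set up in the excerpt) by verifying that the epicenter is zero. I would favor the epicenter route, since Proposition \ref{dd} and Corollary \ref{lll} give us precise dimension-counting tools. By Corollary \ref{lll}, $Z^*(L)\subseteq L^2$ for any non-abelian nilpotent $L$, so it suffices to rule out each one-dimensional central ideal inside $L^2$ being contained in $Z^*(L)$; by Proposition \ref{dd}(ii) this is exactly the statement that the dimension inequality in Proposition \ref{dd}(i) is strict for every such ideal.

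First I would treat $L_{4,3}=\langle x_1,\dots,x_4\mid[x_1,x_2]=x_3,[x_1,x_3]=x_4\rangle$, the maximal-class algebra of dimension $4$. Here $L_{4,3}^2=\langle x_3,x_4\rangle$ and $Z(L_{4,3})=\langle x_4\rangle=L_{4,3}^3$. Since $L_{4,3}$ is unicentral (its center is one-dimensional and equals $L^3$), the only candidate central ideal to test is $I=\langle x_4\rangle$. Then $L_{4,3}/\langle x_4\rangle\cong H(1)$, which is capable by \cite[Lemma 2.11]{ni4} (or \cite[Theorem 3.3]{ni3}); hence by Lemma \ref{cor1} we would get $Z^*(L_{4,3})\subseteq\langle x_4\rangle$ is the only possibility, and to finish I would compute $\dim\mathcal{M}(L_{4,3})$ and $\dim\mathcal{M}(H(1))$ — the known values are $\dim\mathcal{M}(L_{4,3})=3$ and $\dim\mathcal{M}(H(1))=2$, with $\dim(L_{4,3}^2\cap\langle x_4\rangle)=1$ — so the inequality $3\geq 2-1$ is strict, whence by Proposition \ref{dd}(ii), $\langle x_4\rangle\not\subseteq Z^*(L_{4,3})$. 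Combined with Corollary \ref{lll} this forces $Z^*(L_{4,3})=0$, i.e. $L_{4,3}$ is capable.

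For $L_{5,5}=\langle x_1,\dots,x_5\mid[x_1,x_2]=x_3,[x_1,x_3]=x_5,[x_2,x_4]=x_5\rangle$ I would argue the same way. Here $L_{5,5}^2=\langle x_3,x_5\rangle$ and $Z(L_{5,5})=\langle x_5\rangle$, so again there is a unique one-dimensional central ideal $I=\langle x_5\rangle$ to test. One checks $L_{5,5}/\langle x_5\rangle\cong H(1)\oplus A(2)$ (or rather the relevant quotient structure: modding out $x_5$ kills both $[x_1,x_3]$ and $[x_2,x_4]$, leaving $[x_1,x_2]=x_3$ on a $4$-dimensional algebra with a one-dimensional free direction, i.e. $H(1)\oplus A(1)$ on five generators before stemming — I would pin the isomorphism type down carefully here). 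The key computation is then $\dim\mathcal{M}(L_{5,5})$ versus $\dim\mathcal{M}(L_{5,5}/\langle x_5\rangle)-1$; using the Hardy–Stitzinger method \cite{ha} as in Proposition \ref{mul}, the strict inequality should come out, giving $\langle x_5\rangle\not\subseteq Z^*(L_{5,5})$ and hence $Z^*(L_{5,5})=0$ by Corollary \ref{lll}.

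The main obstacle I anticipate is the bookkeeping of the two Schur-multiplier dimensions for $L_{5,5}$ — identifying the exact isomorphism type of the quotient $L_{5,5}/\langle x_5\rangle$ and computing $\dim\mathcal{M}(L_{5,5})$ reliably (via \cite{ha} or via the Künneth-type formulas for direct sums together with the known multipliers of Heisenberg and abelian algebras). If a direct construction is cleaner, an alternative is to exhibit explicit $6$- or $7$-dimensional covers $H$ with $H/Z(H)\cong L_{4,3}$ and $H/Z(H)\cong L_{5,5}$ respectively — for $L_{5,5}$ one natural candidate is a suitable extension living inside the class-$3$ algebras classified in Theorem \ref{lkl}, since those $T$ have $T/Z(T)$ of the right shape — but I expect the epicenter/multiplier argument above to be the most economical given the tools already in place.
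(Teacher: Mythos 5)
Your strategy (show $Z^{*}=0$ by testing the unique one-dimensional central ideal against the dimension formula of Proposition \ref{dd}) is sound and genuinely different from what the paper does. The paper's proof is a two-line construction: it exhibits explicit witnesses from the classification in \cite{Gr}, namely $L_{5,7}$ with $L_{5,7}/Z(L_{5,7})=L_{5,7}/\langle x_5\rangle\cong L_{4,3}$ and $L_{6,13}$ with $L_{6,13}/Z(L_{6,13})=L_{6,13}/\langle x_6\rangle\cong L_{5,5}$, so capability follows directly from the definition with no multiplier computation at all. (You mention this alternative at the end of your proposal; it is the economical one, and the required algebras have dimensions $5$ and $6$, not $6$ and $7$.) Your epicenter route does work: since $Z^{*}(L)\subseteq Z(L)\cap L^{2}$ and both algebras have one-dimensional center, only $\langle x_4\rangle$, respectively $\langle x_5\rangle$, needs to be tested, and the relevant strict inequalities do hold. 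But it buys nothing here and costs two Schur-multiplier computations that your write-up does not actually carry out.

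Two concrete points. First, for $L_{5,5}$ you leave the decisive step as ``the strict inequality should come out'': what is needed is $\dim\mathcal{M}(L_{5,5})>\dim\mathcal{M}\bigl(L_{5,5}/\langle x_5\rangle\bigr)-1$. The quotient is $H(1)\oplus A(1)$ (dimension $4$; your hedging between $A(1)$ and $A(2)$ must be resolved --- factoring a five-dimensional algebra by its one-dimensional center leaves dimension four), whose multiplier has dimension $4$, and one computes $\dim\mathcal{M}(L_{5,5})=4>3$, so the argument closes; but as written this is an unverified assertion rather than a proof. Second, the value $\dim\mathcal{M}(L_{4,3})=3$ is not correct: writing $L_{4,3}=F/R$ with $F$ free on two generators $a,b$, the ideal $R$ is generated modulo $F^{4}$ by $[[a,b],b]$, and a direct computation of $(R\cap F^{2})/[R,F]$ gives $\dim\mathcal{M}(L_{4,3})=2$. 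This does not damage your conclusion, since $2>\dim\mathcal{M}(H(1))-1=1$ is still strict, but the number should be corrected.
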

\begin{proof}
 From  \cite{Gr}, let
$L_{5,7}=\langle x_1,\ldots,x_5| [x_1, x_2] = x_3, [x_1, x_3] = x_4, [x_1, x_4] = x_5\rangle$ and
$L_{6,13}=\langle x_1,\ldots,x_6| [x_1, x_2] = x_3, [x_1, x_3] = x_5, [x_2, x_4] = x_5, [x_1, x_5] = x_6,[x_3, x_4] = x_6\rangle$.
We have $Z(L_{5,7})=\langle x_5\rangle $ and $Z(L_{6,13})=\langle x_6\rangle,  $ so $L_{5,7} /\langle x_5\rangle\cong L_{4,3}$  and $L_{6,13}/\langle x_6\rangle\cong L_{5,5}.$  Thus $L_{4,3}  $ and $ L_{5,5} $ are  capable.
\end{proof}
We are in a position to characterize the capability of an $ n$-dimensional stem Lie algebra $ T $  such that $cl(T)=3$  and $\dim T^2=2.$
\begin{thm}\label{p11}
  Let $ T $ be an $ n$-dimensional stem Lie algebra such that $cl(T)=3$  and $\dim T^2=2.$  Then $ T$ is capable if and only if  $T\cong L_{4,3}$ or   $T\cong L_{5,5}.$
  \end{thm}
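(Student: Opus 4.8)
The plan is to combine the structural classification in Theorem \ref{lkl} with the non-capability results already established. First I would dispose of the ``only if'' direction. By Theorem \ref{lkl}, an $n$-dimensional stem Lie algebra $T$ with $cl(T)=3$ and $\dim T^2=2$ is isomorphic to one of $L_{4,3}$, $L_{5,5}$, or a Lie algebra of the form $I\dotplus I_1$ or $(I\rtimes K)\dotplus I_1$ with $I\cong L_{4,3}$, $I_1\cong H(m)$, $K\cong A(1)$, and crucially $Z(T)=Z(I)=I_1^2=[I,K]$. In the last two cases, writing $T=A\dotplus B$ with $A$ equal to $I$ (respectively $I\rtimes K$) and $B=I_1$, we have $A^2\cap B^2\supseteq Z(T)\neq 0$ because $Z(I)=I_1^2$ lies in both $A^2=I^2$ and $B^2=I_1^2$. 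Hence Proposition \ref{Hi} applies and shows $T$ is non-capable; this is essentially the content of Theorem \ref{171}. So the only candidates for capable $T$ are $L_{4,3}$ and $L_{5,5}$.

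Next I would establish the ``if'' direction, namely that $L_{4,3}$ and $L_{5,5}$ are in fact capable. This is exactly Lemma \ref{cc1}: one exhibits $L_{5,7}$ with $L_{5,7}/Z(L_{5,7})\cong L_{4,3}$ and $L_{6,13}$ with $L_{6,13}/Z(L_{6,13})\cong L_{5,5}$, so both quotients arise as central quotients and are therefore capable by definition. Assembling the two directions gives the theorem. One small bookkeeping point: the hypothesis $n\geq 6$ does not appear in the present statement, so I would note that for $n=4$ only $L_{4,3}$ occurs (and it is capable) and for $n=5$ only $L_{5,5}$ occurs (Lemma \ref{rr11}, and it is capable), so the statement holds without any dimension restriction; all the genuinely new content is in dimensions $\geq 6$, where Theorem \ref{171} rules everything out.

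The main obstacle is not in this final theorem at all --- it is a short corollary of the machinery --- but rather lies upstream in Theorem \ref{lkl} and Proposition \ref{48}, where the decomposition $T=I\dotplus I_1$ or $T=(I\rtimes K)\dotplus I_1$ with the precise intersection condition $Z(I)=I_1^2$ must be extracted. The key delicate point, reused here, is that in the direct-product-type decompositions the two factors share their derived subalgebra in the socle, which is precisely what feeds Proposition \ref{Hi}; without the identification $Z(T)=Z(I)=I_1^2$ one could not conclude $A^2\cap B^2\neq 0$. So in writing the proof I would be careful to cite the ``Moreover'' clause of Theorem \ref{lkl} explicitly rather than just the case list, and then invoke Proposition \ref{Hi} (or directly Theorem \ref{171}) to finish.

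\begin{proof}
If $n=4$, then $T\cong L_{4,3}$ by Theorem \ref{lkl}$(a)$, and $L_{4,3}$ is capable by Lemma \ref{cc1}. If $n=5$, then $T\cong L_{5,5}$ by Lemma \ref{rr11}, and $L_{5,5}$ is capable by Lemma \ref{cc1}. Now suppose $n\geq 6$. By Theorem \ref{lkl}, either $T\cong L_{5,5}$ (case $(b)$), which is capable by Lemma \ref{cc1}, or $T$ is as in case $(c)$ or $(d)$. In the latter two cases Theorem \ref{171} shows that $T$ is non-capable. Hence $T$ is capable if and only if $T\cong L_{4,3}$ or $T\cong L_{5,5}$.
\end{proof}
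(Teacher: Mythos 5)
Your proof is correct and takes essentially the same route as the paper's, which likewise deduces the result by combining the classification in Theorem \ref{lkl}, the non-capability of the $n\geq 6$ cases from Theorem \ref{171} (ultimately Proposition \ref{Hi} applied to the central product decomposition with $Z(I)=I_1^2$), and the capability of $L_{4,3}$ and $L_{5,5}$ from Lemma \ref{cc1}. Your extra case-by-dimension bookkeeping is harmless and only makes explicit what the paper leaves implicit.
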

\begin{proof}
  Let $ T $ be  capable. By Theorems  \ref{lkl}, \ref{171} and Lemma \ref{cc1}, $ T $ is isomorphic to $ L_{4,3}$ or  $ L_{5,5}.$  The converse holds by Lemma \ref{cc1}.
\end{proof}
The next theorem gives a necessary and sufficient condition for detecting  the capability of stem Lie algebras of class $ 3 $ with the derived subalgebra of dimension $ 2. $
\begin{thm}\label{fg}
Let $ T $ be an $ n$-dimensional stem Lie algebra such that $cl(T)=3$  and $\dim T^2=2.$ Then $ T$ is capable if and only if
$3\leq \dim (T/Z(T)) \leq 4.$
\end{thm}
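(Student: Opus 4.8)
The plan is to reduce Theorem~\ref{fg} to the classification already obtained in Theorem~\ref{p11}. Both statements concern the same class of Lie algebras --- $n$-dimensional stem Lie algebras $T$ with $cl(T)=3$ and $\dim T^2=2$ --- so it suffices to show that the numerical condition $3\le \dim(T/Z(T))\le 4$ is equivalent to $T$ being isomorphic to $L_{4,3}$ or $L_{5,5}$.

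First I would compute $\dim(T/Z(T))$ for each Lie algebra appearing in the structural classification Theorem~\ref{lkl}. By Lemma~\ref{ggg} we have $Z(T)=T^3\cong A(1)$, so $\dim(T/Z(T))=n-1$ in every case. Hence the inequality $3\le\dim(T/Z(T))\le 4$ is simply $4\le n\le 5$. Going through Theorem~\ref{lkl}: case $(a)$ gives $T\cong L_{4,3}$ with $n=4$; case $(b)$ gives $T\cong L_{5,5}$ with $n=5$; case $(c)$ gives $n=2m+4\ge 6$; and case $(d)$ gives $n=2m+5\ge 7$. Therefore $n\in\{4,5\}$ holds precisely in cases $(a)$ and $(b)$, i.e. precisely when $T\cong L_{4,3}$ or $T\cong L_{5,5}$.

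Combining this with Theorem~\ref{p11}, which asserts that $T$ is capable if and only if $T\cong L_{4,3}$ or $T\cong L_{5,5}$, I obtain the chain of equivalences: $T$ is capable $\iff$ $T\cong L_{4,3}$ or $L_{5,5}$ $\iff$ $n\in\{4,5\}$ $\iff$ $3\le\dim(T/Z(T))\le 4$. This closes the argument. I do not anticipate a genuine obstacle here, since all the heavy lifting --- the structure theorem and the determination of which algebras are capable --- has been done in Theorems~\ref{lkl}, \ref{171} and \ref{p11} together with Lemma~\ref{cc1}; the only point requiring care is confirming via Lemma~\ref{ggg} that $\dim Z(T)=1$ uniformly, so that translating the dimension bound on $T/Z(T)$ into a bound on $n$ is legitimate in every case of the classification.
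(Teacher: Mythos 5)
Your proposal is correct and follows exactly the paper's route: the paper also deduces Theorem~\ref{fg} from Lemma~\ref{ggg} (which gives $\dim Z(T)=1$, hence $\dim(T/Z(T))=n-1$) together with Theorem~\ref{p11}. You merely spell out the dimension count through the cases of Theorem~\ref{lkl}, which the paper leaves implicit.
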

\begin{proof}
The result follows from Lemma \ref{ggg} and Theorem \ref{p11}.
\end{proof}
Recall that a Lie algebra $L$ is called  unicentral if $Z^{*}(L)=Z(L).$
\begin{cor}\label{gk}
Let $ T $ be an $ n$-dimensional stem Lie algebra such that $cl(T)=3$  and $\dim T^2=2.$ Then $ T $ is non-capable if and only if $ n\geq 6.$
Moreover, $T $ is unicentral.
\end{cor}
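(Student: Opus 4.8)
The plan is to deduce Corollary \ref{gk} directly from the results already assembled, namely Theorem \ref{p11} (or equivalently Theorem \ref{fg}) for the capability statement and Theorem \ref{171} together with Lemma \ref{z} for unicentrality. For the first assertion, I would argue as follows. By Theorem \ref{p11}, $T$ is capable precisely when $T\cong L_{4,3}$ or $T\cong L_{5,5}$, and these have dimensions $4$ and $5$ respectively. Hence if $n\geq 6$ then $T$ is not isomorphic to either, so $T$ is non-capable; conversely, if $T$ is non-capable then it cannot be $L_{4,3}$ or $L_{5,5}$, and since a stem Lie algebra of class $3$ with $\dim T^2=2$ has $\dim T\geq 4$, we must have $n\geq 6$. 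This gives the "if and only if" at once. (One should note that $n=4$ forces $T\cong L_{4,3}$ and $n=5$ forces $T\cong L_{5,5}$ by Theorem \ref{lkl}, so the dichotomy is clean.)

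For the unicentrality claim I would split according to whether $T$ is capable. If $T$ is capable, then $Z^{*}(T)=0$, and since $T$ is stem we have $Z(T)\subseteq T^2$ with $\dim Z(T)=1$ by Lemma \ref{ggg}; but "stem" already means $Z^{*}(T)=0\subseteq Z(T)$, and in fact for capable Lie algebras $Z^{*}(T)=0$ is automatic, so we must instead recall that a stem Lie algebra satisfies $Z^{*}(T)=0$ by definition of being a stem algebra — wait, the cleaner route is: every stem Lie algebra has $Z^{*}(T)=0$, hence trivially $Z^{*}(T)=Z(T)$ would fail unless $Z(T)=0$. So the honest statement is that unicentrality here is being asserted only in the non-capable range $n\geq 6$. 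I would therefore restrict to $n\geq 6$: by Theorem \ref{171}, $T\cong (I\rtimes K)\dotplus H$ or $T\cong I\dotplus H$ with $H\cong H(m)$, $I\cong L_{4,3}$, and $Z(T)=Z(I)=H^2=[K,I]$. Writing $L=A\dotplus B$ with $A=I\rtimes K$ (or $A=I$) and $B=H$, we have $A^2\supseteq Z(I)=Z(T)$ and $B^2=H^2=Z(T)$, so $A^2\cap B^2=Z(T)\neq 0$; Proposition \ref{Hi} then gives $Z(T)=A^2\cap B^2\subseteq Z^{\wedge}(L)=Z^{*}(L)$. Combined with the general inclusion $Z^{*}(T)\subseteq Z(T)$ (valid because $T$ is non-abelian nilpotent and one may also use Corollary \ref{lll}), this yields $Z^{*}(T)=Z(T)$, i.e. $T$ is unicentral.

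The main subtlety — and the place I would be most careful — is the logical scope of the "moreover" clause: as stated it cannot hold for the capable cases $L_{4,3}$ and $L_{5,5}$ (there $Z^{*}=0\neq Z$), so the sentence must be read as applying to the non-capable $T$, equivalently to all $T$ with $n\geq 6$. I would phrase the proof so that this is explicit: first dispatch the capability equivalence, then say "Now suppose $n\geq 6$" before invoking Theorem \ref{171} and Proposition \ref{Hi}. Everything else is a routine chase through the central-product decomposition, using that $Z(H(m))=H(m)^2$ so that the relevant intersection of derived subalgebras is exactly the one-dimensional center $Z(T)$.

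\begin{proof}
If $T\cong L_{4,3}$ or $T\cong L_{5,5}$, then $\dim T\leq 5$. Conversely, by Theorem \ref{lkl} a stem Lie algebra of class $3$ with $\dim T^2=2$ and $\dim T=4$ is isomorphic to $L_{4,3}$, and one of dimension $5$ is isomorphic to $L_{5,5}$. Hence, by Theorem \ref{p11}, $T$ is capable if and only if $\dim T\leq 5$, that is, $T$ is non-capable if and only if $n\geq 6$.

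Now assume $n\geq 6$, so that $T$ is non-capable. By Theorem \ref{171} we may write $T\cong (I\rtimes K)\dotplus H$ or $T\cong I\dotplus H$, where $H\cong H(m)$, $K\cong A(1)$, $I\cong L_{4,3}$, and $Z(T)=Z(I)=H^2=[K,I]$. Put $L=A\dotplus B$ with $A=I\rtimes K$ in the first case and $A=I$ in the second, and $B=H$. In either case $Z(I)\subseteq A^2$, so $A^2\supseteq Z(T)$, while $B^2=H^2=Z(T)$ because $H$ is a Heisenberg Lie algebra. Therefore $A^2\cap B^2=Z(T)\neq 0$, and Proposition \ref{Hi} yields $Z(T)=A^2\cap B^2\subseteq Z^{\wedge}(L)=Z^{*}(L)$. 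On the other hand, $T$ is non-abelian nilpotent, so $Z^{*}(T)\subseteq Z(T)$ by Corollary \ref{lll} (indeed $Z^{*}(T)\subseteq T^2$, and in any Lie algebra $Z^{*}(T)\subseteq Z(T)$). Combining the two inclusions gives $Z^{*}(T)=Z(T)$, so $T$ is unicentral.
\end{proof}
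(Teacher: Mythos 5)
Your proof is correct and follows essentially the same route as the paper, which simply cites Theorems \ref{171} and \ref{p11}; your expansion of the unicentrality claim via Proposition \ref{Hi} together with $Z^{*}(T)\subseteq Z(T)$ is exactly the content left implicit in the paper's one-line citation, and your reading of the ``moreover'' clause as applying to the non-capable case is the intended one. As a minor simplification, note that since $\dim Z(T)=1$ by Lemma \ref{ggg} and $0\neq Z^{*}(T)\subseteq Z(T)$ for non-capable $T$, non-capability alone already forces $Z^{*}(T)=Z(T)$ without re-examining the central product decomposition.
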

\begin{proof}
The result follows from Theorems \ref{171} and \ref{p11}.
\end{proof}

 \section{Nilpotent Lie Algebras with the derived subalgebra of dimension two}
 In this section, we are going to determine all capable nilpotent Lie algebras   with the derived subalgebra of dimension $ 2. $
At first we show that every finite dimensional nilpotent Lie algebra of  class
 $3$ with derived subalgebra of dimension $2$ can be considered as a direct sum of a non-abelian stem Lie algebra of class $3$ and an abelian Lie algebra.\\
 The following result shows that the capability of the direct product of a  non-abelian Lie algebra and an  abelian Lie algebra depends only on the capability of its non-abelian factor.
\begin{thm}\label{8}
Let $ L $ be a finite dimensional nilpotent Lie algebra of  class $ 3$ and $ \dim L^2=2 $. Then $ L=T\oplus A $ such that  $ Z(T) =L^2\cap Z(L)=L^3=T^3$  and  $Z^{*}(L)=Z^{*}(T), $ where $ A $ is an abelian Lie algebra.
\end{thm}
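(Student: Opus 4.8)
The plan is to produce the abelian complement $A$ by a standard "stem completion" argument and then invoke the earlier structural results to pin down $Z(T)$. First I would pass to a complement of $L^3$ inside $Z(L)$: since $L$ is nilpotent of class $3$ with $\dim L^2 = 2$, we have $A(1)\cong L^3\subseteq Z(L)$ and $L^2/L^3\cong A(1)$, so $L^3\subsetneq L^2$; choose a subspace $A\subseteq Z(L)$ with $Z(L)=(Z(L)\cap L^2)\oplus A$. Because $\dim L^2=2$, either $Z(L)\cap L^2=L^3$ or $Z(L)\cap L^2=L^2$; the latter forces $L^2\subseteq Z(L)$, i.e. $cl(L)\le 2$, contradicting $cl(L)=3$, so $Z(L)\cap L^2=L^3$ and hence $Z(L)=L^3\oplus A$. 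Now $A$ is a central ideal with $A\cap L^2=0$; I would set $T$ to be a subalgebra complementing $A$. The key point for the existence of such a complementary subalgebra is that $A$ is central and intersects $L^2$ trivially, so on the quotient $L/L^2$ (which is abelian) one may choose a complement to $(A+L^2)/L^2$ and lift it together with $L^2$; since $[L,A]=0$ the resulting subspace $T$ is closed under bracket and $T^2=L^2$, $T^3=L^3$. This gives $L=T\oplus A$ as Lie algebras, with $T$ non-abelian (as $T^2=L^2\ne 0$) of class $3$ and $\dim T^2=2$, and $A$ abelian.

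Next I would check that $T$ is a stem Lie algebra so that Lemma \ref{ggg} applies, and in fact identify $Z(T)$. Since $T\oplus A=L$ and $A\subseteq Z(L)$, we get $Z(L)=Z(T)\oplus A$, hence $Z(T)=Z(L)\cap T$. Combined with $Z(L)=L^3\oplus A$ and $L^3=T^3\subseteq T$ this yields $Z(T)=L^3=T^3=L^2\cap Z(L)$, which is exactly the asserted identity $Z(T)=L^2\cap Z(L)=L^3=T^3$; in particular $Z(T)\subseteq T^2$, so $T$ is stem. (Alternatively one can simply quote Lemma \ref{ggg} once $T$ is known to be a stem class-$3$ algebra with $\dim T^2=2$, but the direct computation above already gives everything.)

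Finally I would compute the epicenter. Using Proposition \ref{dd}(ii) with the central ideal $I=A$: since $A\cap L^2=0$ we have $\dim(L^2\cap A)=0$, so $\dim\mathcal{M}(L)\ge \dim\mathcal{M}(L/A)=\dim\mathcal{M}(T)$, and equality $\dim\mathcal{M}(L)=\dim\mathcal{M}(L/A)-\dim(L^2\cap A)=\dim\mathcal{M}(T)$ holds if and only if $A\subseteq Z^{*}(L)$; this last inequality $\dim\mathcal{M}(L)\ge\dim\mathcal{M}(T)$ together with the reverse one coming from $L=T\oplus A$ and the K\"unneth-type formula for the Schur multiplier of a direct sum gives $\dim\mathcal{M}(L)=\dim\mathcal{M}(T)+\dim\mathcal{M}(A)+\dim(T^{(ab)}\otimes A)$, and since $A$ is abelian with $\mathcal{M}(A)$ and $T^{(ab)}\otimes A$ accounting precisely for the "extra" multiplier, a short bookkeeping shows $A\subseteq Z^{*}(L)$. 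Hence $Z^{*}(L)=Z^{*}(L/A)$ pulled back, and because $L/A\cong T$ and $Z^{*}$ commutes with this isomorphism, $Z^{*}(L)=Z^{*}(T)+A$; but by Corollary \ref{gk} $T$ is unicentral, so $Z^{*}(T)=Z(T)$ and therefore $Z^{*}(L)=Z(T)\oplus A=Z(L)$. To match the stated conclusion $Z^{*}(L)=Z^{*}(T)$ exactly (with $Z^{*}(T)$ viewed inside $L$), note $Z^{*}(T)=Z(T)$ and the identification is via the projection $L\to T$; I would phrase the final line as: $Z^{*}(L)=Z(L)=Z(T)=Z^{*}(T)$, where $Z^{*}(T)$ is regarded as a subalgebra of $L$ through $L=T\oplus A$.

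The main obstacle I anticipate is the Schur-multiplier bookkeeping that forces $A\subseteq Z^{*}(L)$: one needs the direct-sum formula $\mathcal{M}(T\oplus A)\cong\mathcal{M}(T)\oplus\mathcal{M}(A)\oplus(T^{(ab)}\otimes A)$ and then to see that this makes the inequality in Proposition \ref{dd}(i) an equality for $I=A$. Everything else — existence of the complement $T$, the identification of $Z(T)$, and the appeal to unicentrality via Corollary \ref{gk} — is routine linear algebra plus citation.
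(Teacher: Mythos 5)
Your construction of the decomposition $L=T\oplus A$ and the identification $Z(T)=Z(L)\cap L^2=L^3=T^3$ are correct, and in fact give a self-contained proof of the part of the statement that the paper simply imports from \cite[Proposition 3.1]{pair} and Lemma \ref{ggg}. The last step, however, contains a genuine error. Your claim that ``a short bookkeeping shows $A\subseteq Z^{*}(L)$'' is false, and the K\"unneth-type formula you invoke proves the opposite: since $T$ is nilpotent and non-abelian, $T^{(ab)}\neq 0$, so for $A\neq 0$ one has $\dim\mathcal{M}(L)=\dim\mathcal{M}(T)+\dim\mathcal{M}(A)+\dim\bigl(T^{(ab)}\otimes A\bigr)>\dim\mathcal{M}(T)=\dim\mathcal{M}(L/A)-\dim(L^2\cap A)$, and Proposition \ref{dd}(ii) then says precisely that $A\not\subseteq Z^{*}(L)$. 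Your conclusion $Z^{*}(L)=Z(L)$ would make every class-$3$ algebra with two-dimensional derived subalgebra non-capable, contradicting Lemma \ref{cc1} and Theorem \ref{2611} (for instance $L_{4,3}\oplus A(1)$ is capable). The auxiliary step ``$Z^{*}(T)=Z(T)$ by Corollary \ref{gk}'' is also wrong when $\dim T\le 5$, where $T\cong L_{4,3}$ or $L_{5,5}$ is capable and $Z^{*}(T)=0\neq Z(T)$; and even granting your claims, the link $Z(L)=Z(T)$ in your final chain fails because $Z(L)=Z(T)\oplus A$.

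What the equality $Z^{*}(L)=Z^{*}(T)$ actually requires is a different argument. Corollary \ref{lll} gives $Z^{*}(L)\subseteq L^2=T^2\subseteq T$, but that alone does not identify $Z^{*}(L)$ with $Z^{*}(T)$. The paper obtains both this equality and $Z(T)=L^2\cap Z(L)$ in one stroke by citing \cite[Proposition 3.1]{pair}, whose proof works with the exterior center of a direct sum (using $Z^{\wedge}=Z^{*}$ from \cite{ni3}) rather than with the multiplier inequality of Proposition \ref{dd}. If you want to keep your hands-on approach, replace your final paragraph either by that citation or by an explicit computation of $Z^{\wedge}(T\oplus A)$ from the decomposition $(T\oplus A)\wedge(T\oplus A)\cong(T\wedge T)\oplus\bigl(T^{(ab)}\otimes A\bigr)\oplus(A\wedge A)$; trying to force $A$ into the epicenter is the wrong direction.
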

\begin{proof}
By using \cite[Proposition 3.1]{pair}, $ L=T\oplus A $ such that  $ Z(T) =L^2\cap Z(L)$  and  $Z^{*}(L)=Z^{*}(T), $ where $ A $ is an abelian Lie algebra. Since $ T $ is stem, Lemma \ref{ggg} implies $ Z(T) =T^3,$ as required.
\end{proof}
In the following theorem, all capable nilpotent Lie algebras of class $ 2 $ with the derived subalgebra of
dimension $ 2$   are classified.
\begin{thm}\label{hh}
Let $ L $ be an $n$-dimensional nilpotent Lie algebra of nilpotency class $ 2 $ and  $ \dim L^2=2. $   Then $ L$ is capable if and only if
$ L\cong L_{5,8} \oplus A(n-5)$, $ L\cong L_{6,22}(\epsilon) \oplus A(n-6)$, $ L\cong L_{6,7}^{(2)}(\eta) \oplus A(n-6)$ or $ L\cong L_1 \oplus A(n-7).$
\end{thm}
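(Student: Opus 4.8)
The plan is to reduce the statement to the already-established classification of capable \emph{generalized Heisenberg Lie algebras} of rank two, via a splitting argument analogous to Theorem \ref{8}. First I would recall that for a nilpotent Lie algebra $L$ of class exactly $2$ with $\dim L^2 = 2$, the derived subalgebra is central, so $L^2 \subseteq Z(L)$. The key structural input is the decomposition $L = H \oplus A$, where $H$ is a stem Lie algebra (i.e., $Z(H) \subseteq H^2$) and $A$ is abelian, together with the fact that $Z^*(L) = Z^*(H)$ and $H^2 = L^2$; this follows from \cite[Proposition 3.1]{pair} exactly as in the proof of Theorem \ref{8}. Since $\dim H^2 = \dim L^2 = 2$ and $H$ is stem of class $2$, the condition $Z(H) \subseteq H^2$ forces $Z(H) = H^2$, so $H$ is a generalized Heisenberg Lie algebra of rank $2$ in the sense recalled before Theorem \ref{11233}.

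Next I would invoke the capability dictionary: $L$ is capable if and only if $Z^*(L) = 0$, and since $Z^*(L) = Z^*(H)$, we get that $L$ is capable if and only if $H$ is capable. Now Theorem \ref{112} gives the complete list of capable generalized Heisenberg Lie algebras of rank two: $H$ must be isomorphic to one of $L_{5,8}$, $L_{6,22}(\epsilon)$, $L_{6,7}^{(2)}(\eta)$, or $L_1$. Combining this with $L \cong H \oplus A$ and comparing dimensions ($\dim A = n - \dim H$, where $\dim H \in \{5,6,6,7\}$ respectively) yields precisely the four families in the statement. Conversely, for each listed $L$, writing $L = H \oplus A(n-\dim H)$ with $H$ one of the four capable algebras, we have $Z^*(L) = Z^*(H) = 0$ by Theorem \ref{112} and the splitting, so $L$ is capable.

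The one point that requires a little care — and which I expect to be the main (though modest) obstacle — is justifying that the abelian summand $A$ can be split off cleanly and that it carries no contribution to the epicenter, i.e. the identity $Z^*(H \oplus A) = Z^*(H)$ when $A$ is a free abelian complement to $H^2$ inside a suitable complement of $Z(H)$. This is essentially the content of \cite[Proposition 3.1]{pair}, but one must check that the class-$2$ hypothesis (rather than the class-$3$ hypothesis used in Theorem \ref{8}) still puts us in the scope of that proposition; since the proof of \cite[Proposition 3.1]{pair} only uses $\dim L^2 = 2$ and the fact that a complement to $Z(L)$ modulo $L^2$ splits off as an abelian direct factor, the class-$2$ case is in fact the easier one and goes through verbatim. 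Once that splitting is in hand, everything else is a direct appeal to Theorem \ref{112} and a dimension count, with no further computation needed.
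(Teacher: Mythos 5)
Your proposal is correct and follows essentially the same route as the paper: reduce to the generalized Heisenberg (stem) part and then invoke Theorem \ref{112}. The only cosmetic difference is that you justify the reduction via the stem-plus-abelian splitting of \cite[Proposition 3.1]{pair} (as in Theorem \ref{8}), whereas the paper cites \cite[Propositions 2.4 and 2.6]{ni4} for the same reduction; both are legitimate and lead to the identical dimension count.
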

\begin{proof}
This is immediately obtained from \cite[Propositions 2.4 and 2.6]{ni4} and  Theorem \ref{112}.
\end{proof}

We are ready to determine all  capable Lie algebras  of class $3  $ when its derived subalgebra is of dimension $ 2. $
\begin{thm}\label{2611}
Let $ L $ be an $n$-dimensional Lie algebra    such that $cl(L)=3$  and $\dim L^2=2.$ Then $L $ is capable if and only if
 $L\cong L_{4,3}\oplus  A(n-4) $ or  $L\cong L_{5,5}\oplus  A(n-5).$
\end{thm}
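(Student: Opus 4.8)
The plan is to reduce the class-$3$ case to the already-established stem case via the direct-sum decomposition of Theorem~\ref{8}, and then invoke the stem classification of Theorem~\ref{p11}. First I would write, using Theorem~\ref{8}, $L = T \oplus A$ where $A$ is abelian, $T$ is a non-abelian stem Lie algebra with $Z(T) = T^3 = L^2 \cap Z(L)$, and $Z^*(L) = Z^*(T)$. Note that $cl(T) = 3$ because $T$ carries all of $L^2$ (indeed $L^2 = T^2$ since $A$ is abelian and the sum is direct), and $\dim T^2 = \dim L^2 = 2$. Hence $T$ falls under the hypotheses of Theorem~\ref{p11}.

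Next I would argue the forward direction. Suppose $L$ is capable, i.e. $Z^*(L) = 0$. Since $Z^*(L) = Z^*(T)$, we get $Z^*(T) = 0$, so $T$ is capable. By Theorem~\ref{p11}, $T \cong L_{4,3}$ or $T \cong L_{5,5}$. In the first case $\dim T = 4$, so $A \cong A(n-4)$ and $L \cong L_{4,3} \oplus A(n-4)$; in the second $\dim T = 5$, so $A \cong A(n-5)$ and $L \cong L_{5,5} \oplus A(n-5)$. This gives the ``only if'' part.

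For the converse, suppose $L \cong L_{4,3} \oplus A(n-4)$ or $L \cong L_{5,5} \oplus A(n-5)$. By Lemma~\ref{cc1}, $L_{4,3}$ and $L_{5,5}$ are capable. The capability of a direct sum of a non-abelian Lie algebra with an abelian one is governed solely by the non-abelian factor: this is exactly the content of \cite[Proposition 3.1]{pair} used in the proof of Theorem~\ref{8}, which gives $Z^*(L_{4,3} \oplus A(n-4)) = Z^*(L_{4,3}) = 0$ and similarly $Z^*(L_{5,5} \oplus A(n-5)) = Z^*(L_{5,5}) = 0$. Hence both algebras are capable, completing the ``if'' direction and the proof.

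The only mild subtlety — and the step I would be most careful about — is verifying that the stem factor $T$ produced by Theorem~\ref{8} genuinely has class exactly $3$ and derived dimension exactly $2$, so that Theorem~\ref{p11} applies; but this is immediate from $L^2 = T^2 \oplus 0$ and $L^3 = T^3$ forced by $A$ being abelian and central in the direct sum. Everything else is a direct citation of Theorems~\ref{8} and~\ref{p11} and Lemma~\ref{cc1}.
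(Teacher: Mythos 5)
Your proposal is correct and follows essentially the same route as the paper: decompose $L = T \oplus A$ via Theorem~\ref{8}, observe that the stem factor $T$ inherits $cl(T)=3$ and $\dim T^2 = 2$, and then transfer capability back and forth through $Z^*(L) = Z^*(T)$ using Theorem~\ref{p11}. The paper states this in two lines; your version merely makes explicit the verification that $T$ satisfies the hypotheses of Theorem~\ref{p11}, which is a worthwhile detail to record.
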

\begin{proof}
 Theorem \ref{8} implies  $L\cong T\oplus A,$ where $ A $ is an abelian Lie algebra and  $ Z(T)=T^2\cap Z(L)=L^3=T^3\cong A(1)$ and $Z^{*}(L)=Z^{*}(T). $ Now the result follows from Theorem \ref{p11}.
\end{proof}
The following result is obtained from Theorems \ref{fg} and \ref{2611}.
\begin{cor}\label{fg1}
Let $ L$ be a finite dimensional Lie algebra  of class $3  $ and $\dim L^2=2.$ Then $ L$ is capable if and only if
$3\leq \dim (L/Z(L)) \leq 4.$
\end{cor}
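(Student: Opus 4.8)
The final statement to prove is Corollary \ref{fg1}: a finite dimensional Lie algebra $L$ of class $3$ with $\dim L^2 = 2$ is capable if and only if $3 \leq \dim(L/Z(L)) \leq 4$.

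The plan is to reduce immediately to the stem case via the structure theorem already established. First I would invoke Theorem \ref{2611}, which says $L$ is capable precisely when $L \cong L_{4,3} \oplus A(n-4)$ or $L \cong L_{5,5} \oplus A(n-5)$. So the forward direction amounts to computing $\dim(L/Z(L))$ for these two algebras: for $L_{4,3}$ we have $\dim L = 4$ and $Z(L_{4,3}) = L^3$ is one-dimensional, so $\dim(L/Z(L)) = 3$; adjoining an abelian summand $A(n-4)$ only enlarges both $L$ and $Z(L)$ by the same amount, so $\dim(L/Z(L))$ stays $3$. For $L_{5,5}$ we have $\dim L = 5$ and $Z(L_{5,5}) = L^3$ is again one-dimensional (by Lemma \ref{ggg} applied to the stem algebra $L_{5,5}$, or directly), giving $\dim(L/Z(L)) = 4$, and again the abelian summand does not change this. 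Hence capability forces $\dim(L/Z(L)) \in \{3,4\}$.

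For the converse I would argue contrapositively: suppose $L$ is a class-$3$ algebra with $\dim L^2 = 2$ and $\dim(L/Z(L)) \notin \{3,4\}$. Write $L = T \oplus A$ with $T$ stem of class $3$ (Theorem \ref{8}); then $Z(L) = Z(T) \oplus A$ and $L/Z(L) \cong T/Z(T)$, so $\dim(T/Z(T)) = \dim(L/Z(L)) \notin \{3,4\}$. By Theorem \ref{fg} the stem algebra $T$ is then non-capable, and since $Z^*(L) = Z^*(T)$ (again Theorem \ref{8}) while $T$ non-capable means $Z^*(T) \neq 0$, we get $Z^*(L) \neq 0$, i.e. $L$ is non-capable. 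Combining the two directions gives the equivalence. Alternatively, and even more directly, one can simply note that $\dim(L/Z(L)) = \dim(T/Z(T))$ and apply Theorem \ref{fg} to $T$ together with Theorem \ref{2611}'s reduction $Z^*(L) = Z^*(T)$ in one line.

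There is essentially no obstacle here: the corollary is a bookkeeping consequence of Theorem \ref{fg} (the stem version of the statement) and Theorem \ref{8} (the direct-sum decomposition that makes $L/Z(L) \cong T/Z(T)$ and $Z^*(L) = Z^*(T)$). The only point requiring a moment's care is checking that passing from a stem algebra $T$ to $T \oplus A$ leaves the quantity $\dim(L/Z(L))$ unchanged, which is immediate since $Z(T \oplus A) = Z(T) \oplus A$. I would therefore write the proof as a two-sentence deduction: by Theorem \ref{8}, $L = T \oplus A$ with $T$ stem of class $3$, $\dim T^2 = 2$, and $Z^*(L) = Z^*(T)$; moreover $L/Z(L) \cong T/Z(T)$, so the claim follows at once from Theorem \ref{fg}.
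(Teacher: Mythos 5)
Your proposal is correct and follows essentially the same route as the paper, which deduces the corollary directly from Theorems \ref{fg} and \ref{2611}; you simply make explicit the reduction to the stem case via Theorem \ref{8} and the observation that $Z(T\oplus A)=Z(T)\oplus A$, details the paper leaves implicit. No issues.
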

We summarize all results  to classify  all capable  nilpotent Lie algebras with the derived subalgebra of dimension at most two.
\begin{thm}
Let $ L $ be an $ n$-dimensional  nilpotent Lie algebra  with $\dim L^2\leq 2.$  Then $ L $ is capable is if and only if $ L $ is isomorphic to one the following Lie algebras.
\begin{itemize}
\item[(i)] If $\dim L^2=0,$ then $ L\cong A(n) $ and $ n>1.$
 \item[(ii)] If $\dim L^2=1,$ then $ L\cong H(1)\oplus A(n-3).$
\item[(iii)]If $\dim L^2=2$ and $cl(L)=2,$ then $ L\cong L_{5,8} \oplus A(n-5),$ $ L=L_{6,7}^{(2)}(\eta) \oplus A(n-6),$ $ L\cong L_{6,22}(\epsilon) \oplus A(n-6),$ or $L\cong L_1\oplus A(n-7).$
\item[(iv)]If $\dim L^2=2$ and $cl(L)=3,$ then $L\cong L_{4,3}\oplus  A(n-4) $ or  $L\cong L_{5,5}\oplus  A(n-5).$
\end{itemize}
\end{thm}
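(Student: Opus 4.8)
The plan is to assemble the four cases of the final theorem directly from the structural results already proved, organizing the argument by the dimension of $L^2$. The statement is a summary classification, so the proof should be short: it is essentially a matter of citing the right earlier result in each case and verifying that nothing is lost by passing to an arbitrary base field or to non-stem algebras.

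First I would dispose of cases $(i)$ and $(ii)$. If $\dim L^2 = 0$ then $L$ is abelian, and $A(n)$ is capable precisely when $n > 1$ since $A(n) \cong A(n+1)/Z(A(n+1))$ whereas $A(1) \cong E/Z(E)$ forces $E$ abelian of dimension $2$, giving $Z(E) = E$; this is standard and appears in \cite{ni3}. If $\dim L^2 = 1$, then by \cite[Theorem 3.3]{ni3} (already invoked in the proof of Corollary~\ref{lll}) the capable such algebras are exactly $H(1) \oplus A(n-3)$; I would simply cite that result. These two items require no new work.

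Next I would handle $(iii)$, the class $2$ case. Here the work is already packaged in Theorem~\ref{hh}: a nilpotent Lie algebra of class $2$ with $\dim L^2 = 2$ is capable if and only if it is isomorphic to $L_{5,8}\oplus A(n-5)$, $L_{6,22}(\epsilon)\oplus A(n-6)$, $L_{6,7}^{(2)}(\eta)\oplus A(n-6)$, or $L_1\oplus A(n-7)$. So $(iii)$ is a verbatim restatement of Theorem~\ref{hh}, and I would cite it directly. Finally, for $(iv)$, the class $3$ case, Theorem~\ref{2611} states exactly that such an algebra is capable if and only if it is $L_{4,3}\oplus A(n-4)$ or $L_{5,5}\oplus A(n-5)$; again this is a direct citation.

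The only genuinely structural point, and the place where one must be slightly careful, is that items $(i)$–$(iv)$ are exhaustive: every nilpotent Lie algebra with $\dim L^2 \le 2$ falls into one of the four cases. For $\dim L^2 \le 1$ this is immediate, and for $\dim L^2 = 2$ one uses the remark (stated at the opening of Section~4) that such an algebra necessarily has nilpotency class $2$ or $3$, which splits into $(iii)$ and $(iv)$. Since each of the four cited theorems is an "if and only if", the union of their conclusions gives the full list, and the proof amounts to: \emph{apply \cite{ni3} for $\dim L^2\le 1$, Theorem~\ref{hh} for $\dim L^2 = 2$ and $cl(L)=2$, and Theorem~\ref{2611} for $\dim L^2 = 2$ and $cl(L) = 3$; these cases are exhaustive by the trichotomy on $\dim L^2$ together with the observation that $\dim L^2 = 2$ forces class at most $3$.} I do not anticipate any obstacle beyond bookkeeping, since all the hard analysis (the field-independent classification in Theorem~\ref{klj1}, the capability computations in Propositions~\ref{11} and \ref{48}, and the epicenter arguments) has been carried out in the preceding sections.
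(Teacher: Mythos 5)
Your proposal matches the paper's proof: the authors likewise assemble the theorem by citing \cite[Theorems 3.3 and 3.6]{ni3} for the cases $\dim L^2\le 1$ and Theorems \ref{hh} and \ref{2611} for the class $2$ and class $3$ cases, with exhaustiveness following from the observation that $\dim L^2=2$ forces class $2$ or $3$. No discrepancies to report.
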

\begin{proof}
The result follows from  \cite[Theorems 3.3 and 3.6]{ni3}, Theorems \ref{hh} and \ref{2611}.
\end{proof}

{\small}
\end{document}